\newcommand{\cX}{{\mathcal{X}}}
\newcommand{\RR}{\mathbb{R}} 
\newcommand{\sign}{\mathrm{sign}} 
\newcommand{\dom}{{\mathrm{dom}}} 
\newcommand{\grad}{{\nabla}}    
\newcommand{\Proj}{{\mathrm{Proj}}} 
\DeclareMathOperator*{\Min}{minimize}
\newcommand{\st}{\mbox{subject to}}
\newcommand{\bc}{\begin{center}}
\newcommand{\ec}{\end{center}}
\newcommand{\bdm}{\begin{displaymath}}
\newcommand{\edm}{\end{displaymath}}
\newcommand{\beq}{\begin{equation}}
\newcommand{\eeq}{\end{equation}}
\newcommand{\bfl}{\begin{flushleft}}
\newcommand{\efl}{\end{flushleft}}
\newcommand{\bt}{\begin{tabbing}}
\newcommand{\et}{\end{tabbing}}
\newcommand{\beqn}{\begin{eqnarray}}
\newcommand{\eeqn}{\end{eqnarray}}
\newcommand{\beqs}{\begin{align*}} 
\newcommand{\eeqs}{\end{align*}}  
\newtheorem{theorem}{Theorem}
\newtheorem{assumption}{Assumption}
\newtheorem{corollary}{Corollary}
\newtheorem{remark}{Remark}
\newtheorem{lemma}{Lemma}
\begin{document}

\title{On the Convergence of Decentralized Gradient Descent}
\date{}
\author{Kun Yuan$^*$\and Qing Ling\footnote{K. Yuan and Q. Ling are with Department of Automation,
University of Science and Technology of China, Hefei, Anhui
230026, China. \textup{kunyuan@mail.ustc.edu.cn and
qingling@mail.ustc.edu.cn}} \and Wotao Yin\footnote{W. Yin is with
Department of Mathematics, University of California, Los Angeles,
CA 90095, USA. \textup{wotaoyin@math.ucla.edu}}} \maketitle
\begin{abstract}
Consider the consensus problem of minimizing $f(x)=\sum_{i=1}^n
f_i(x)$, where $x\in\RR^p$ and each $f_i$ is only known to the
individual agent $i$ in a connected network of $n$ agents. To
solve this problem and obtain the solution, all the agents
collaborate with their {neighbors} through information exchange.
This type of decentralized computation does not need a fusion
center, offers better network load balance, and improves data
privacy. This paper studies the decentralized gradient descent
method \cite{Nedic2009}, in which each agent $i$ updates its local
variable $x_{(i)}\in\RR^n$ by {combining} the average of its
neighbors' with a {local} negative-gradient step $-\alpha \nabla
f_i(x_{(i)})$. The method is described by the iteration
\begin{align}
\label{abs:dgd}
x_{(i)}(k+1) \gets \sum_{j=1}^n w_{ij} x_{(j)}(k) -
\alpha \nabla f_i(x_{(i)}(k)),\quad\text{for each agent}~i,
\end{align}
where $w_{ij}$ is nonzero only if {$i$ and $j$} are neighbors
or $i=j$ and the matrix  $W=[w_{ij}] \in \mathbb{R}^{n \times n}$
is symmetric and doubly stochastic.

This paper analyzes the convergence of this iteration and derives
its rate of convergence under the assumption that each $f_i$ is
proper closed convex and lower bounded, $\nabla f_i$ is Lipschitz
continuous with constant $L_{f_i}>0$, and the stepsize $\alpha$ is
fixed. Provided that $\alpha < O(1/L_h)$, where
$L_h=\max_i\{L_{f_i}\}$, the objective errors of all the local
solutions and the network-wide mean solution reduce at rates of
$O(1/k)$ until they reach a level of $O(\alpha)$. If $f_i$ are
(restricted) strongly convex, then all the local solutions and the
mean solution converge to the global minimizer $x^*$ at a linear
rate until reaching an $O(\alpha)$-neighborhood of $x^*$. We also
develop an iteration for decentralized basis pursuit and establish
its linear convergence to an $O(\alpha)$-neighborhood of the true
sparse signal. This analysis reveals how the convergence of
\eqref{abs:dgd} depends on the stepsize, function convexity, and
network spectrum.
\end{abstract}

\section{Introduction}
Consider that $n$ agents form a connected network and
collaboratively solve a consensus optimization problem
\begin{align}
\label{eq:original problem}
\Min\limits_{x\in\RR^p} \quad f(x) = \sum_{i=1}^n f_i(x),
\end{align}
where each $f_i$ is only available to agent $i$. {A pair of agents
can exchange data if and only if they are connected by a direct
communication link; we say that such two agents are neighbors of
each other.} Let $\cX^*$ denote the set of solutions to
\eqref{eq:original problem}, which is assumed to be non-empty, and
let $f^*$ denote the optimal objective value.

The traditional (centralized) gradient descent iteration is
\beq\label{org_grad}
x(k+1) = x(k) - \alpha \nabla f(x(k)),
\eeq
where $\alpha$ is the stepsize, either fixed or varying with  $k$. To apply  iteration \eqref{org_grad} to problem \eqref{eq:original problem} under the decentralized situation, one has two choices of implementation:
\begin{itemize}
\item let a fusion center (which can be a designated agent) carry
out iteration \eqref{org_grad}; \item let all the agents carry out
the same iteration \eqref{org_grad} in parallel.
\end{itemize}
In either way, $f_i$ (and thus $\nabla f_i$) is only known to
agent $i$. Therefore, in order to obtain $ \nabla f(x(k)) =
\sum_{i=1}^n \nabla f_i(x(k))$, every agent $i$ must have  $x(k)$,
compute $\nabla f_i(x(k))$, and then send out $\nabla f_i(x(k))$.
This approach requires synchronizing $x(k)$ and
scattering/collecting $\nabla f_i(x(k))$,  $i=1,\ldots,n$, over
the entire network, which incurs a significant amount of
communication traffic, especially if the network is  large and
sparse. A decentralized approach will be more viable since its
communication is restricted to between neighbors. Although there
is no guarantee that  decentralized algorithms use less
communication (as they tend to take more iterations), they provide
better network load balance and tolerance to the failure of
individual agents. In addition, each agent can keep its $f_i$ and
$\nabla f_i$ private to some extent\footnote{ Neighbors of $i$ may
know the samples of $f_i$ and/or $\nabla f_i$ at some points
through data exchanges and thus obtain an interpolation of
$f_i$.}.

Decentralized gradient descent \cite{Nedic2009} does not rely on a fusion center or network-wide communication. It carries out an approximate version of \eqref{org_grad} in the following fashion:
\begin{itemize}
\item let each agent $i$ hold an approximate \textit{copy}
$x_{(i)}\in\RR^p$ of $x\in\RR^p$;
 \item let each agent $i$ update its
$x_{(i)}$ to the weighted average of its neighborhood; \item let each
agent $i$ apply $-\nabla f_i(x_{(i)})$  to
decrease   $f_i(x_{(i)})$.
\end{itemize}
At each iteration $k$, each agent $i$ performs the following
steps:
\begin{enumerate}
\item computes $\nabla f_i(x_{(i)}(k))$; \item computes the
neighborhood weighted average $x_{(i)}(k+1/2) =  \sum_{j} w_{ij}x_{(j)}(k)$, where
$w_{ij}\not=0$ only if $j$ is a neighbor of $i$ or $j=i$;
\item applies $x_{(i)}(k+1) = x_{(i)}(k+1/2)-\alpha\nabla f_i(x_{(i)}(k))$.
\end{enumerate}
Steps 1 and 2 can be carried out in parallel, and their results
are used in Step 3. Putting the three steps together, we arrive at
our main iteration \beq\label{dec_grad} \boxed{x_{(i)}(k+1) =
\sum_{j=1}^n w_{ij}x_{(j)}(k) - \alpha \nabla
f_i(x_{(i)}(k)),\quad i = 1,2,\ldots, n.} \eeq When $f_i$ is not
differentiable, by replacing $\grad f_i$ with a member of
$\partial f_i$ we obtain the decentralized \emph{subgradient}
method \cite{Nedic2009}. Other decentralization methods are
reviewed {Section \ref{blabla}} below.

We assume that the mixing matrix $W=[w_{ij}]$ is symmetric and
doubly stochastic. The eigenvalues of
 $W$ are real and  sorted in a nonincreasing order $1 =
\lambda_1(W) \geq \lambda_2(W) \geq \cdots \geq \lambda_n(W) \geq
-1$. Let the second largest magnitude of the  eigenvalues of $W$ be
denoted as \beq\label{beta}\beta =
\max\left\{|\lambda_2(W)|,|\lambda_n(W)|\right\}.\eeq
The optimization of
matrix $W$ and, in particular, $\beta$, is not our focus; the reader is referred to \cite{Boyd2004}.

Some basic questions regarding the decentralized gradient
method include: (i) When does $x_{(i)}(k)$ converge? (ii) Does it
converge to $x^* \in \cX^*$? (iii) If $x^*$ is not the limit,
does consensus (i.e., $x_{(i)}(k)=x_{(j)}(k)$, $\forall i,j$) hold asymptotically? (iv) How do the properties of $f_i$ and the
network affect convergence?

\subsection{Background}\label{backgrd}
The study on decentralized optimization can be traced back to the
seminal work in the 1980s \cite{Tsitsiklis1986,Tsitsiklis1984}.
Compared to  optimization with a fusion center that
collects data and performs computation, decentralized
optimization enjoys the advantages of scalability to network
sizes, robustness to dynamic topologies, and privacy preservation
in data-sensitive applications \cite{Sayed2013,
Ling2013_reweighted, Olfati-Saber2007, Yan2013}. These properties are important for
applications where data are collected by distributed
agents, communication to a fusion center is expensive or
impossible, and/or agents tend to keep their raw data private;
such applications arise in wireless sensor networks
\cite{Ling2010, Predd2007, Schizas2008,Zhao2002}, multivehicle
and multirobot networks \cite{Cao2013, Ren2007, Zhou2010}, smart
grids \cite{Giannakis2013, Kekatos2013}, cognitive radio networks
\cite{Bazerque2010, Bazerque2011}, etc. The recent research
interest in big data processing also motivates the work of
decentralized optimization in machine learning \cite{Duchi2012,
Tsianos2012_application}. Furthermore, the decentralized optimization problem \eqref{eq:original problem} can be extended
to  the online or dynamic settings where the objective function becomes
an online regret \cite{Tsianos2012_strong, Yan2013} or a dynamic
cost \cite{Cavalcante2013, Jakubiec2013, Ling2013_dynamic}.

{To demonstrate how decentralized optimization works},
we take spectrum sensing in a cognitive radio network as an
example. Spectrum sensing aims at detecting unused spectrum bands,
{and thus enables} the cognitive radios to opportunistically
use them for data communication. Let $x$ be a vector whose
elements are the signal strengths of spectrum channels. Each
cognitive radio $i$ takes time-domain measurement
 $b_i = F^{-1} G_i x + e_i$, where $G_i$ is
the channel fading matrix, $F^{-1}$ is the
inverse Fourier transform matrix, and $e_i$ is the measurement
noise. To each cognitive radio $i$, assign a local objective function   $f_i(x) = (1/2) \|b_i -
F^{-1} G_i x\|^2$ or the regularized function $f_i(x) = (1/2)
\|b_i - F^{-1} G_i x\|^2 + \phi(x)$, where
$\phi(x)$ promotes a certain structure of $x$. To estimate $x$, a set of geologically nearby cognitive
radios collaboratively solve the consensus optimization problem
\eqref{eq:original problem}. Decentralized
optimization is suitable for this application since  communication between nearby cognitive
radios are  fast and energy-efficient and, if a cognitive radio joins and leaves the network, no
reconfiguration is needed.

\subsection{Related methods}\label{blabla}
The decentralized stochastic subgradient projection algorithm
\cite{Ram2010} handles constrained optimization; the fast
decentralized gradient methods \cite{Jakovetic2013} adopts
Nesterov's acceleration; the distributed online gradient descent
algorithm\footnote{Here we consider its decentralized batch
version.} \cite{Tsianos2012_strong} has nested iterations, where
the inner loop performs a fine search; the dual averaging
subgradient method \cite{Duchi2012} {carries out} a
projection operation after averaging and descending.
Unsurprisingly, decentralized computation tends to require more
assumptions for convergence than similar centralized computation.
All of the above algorithms are analyzed under the assumption of
bounded (sub)gradients. Unbounded gradients can potentially cause
algorithm divergence. When using a fixed stepsize, the above
algorithms (and iteration \eqref{dec_grad} in particular) converge
to a neighborhood of $x^*$ rather than $x^*$ itself. The size of
the neighborhood goes monotonic in the stepsize. Convergence to
$x^*$ can be achieved by using  diminishing stepsizes in
\cite{Duchi2012,Jakovetic2013,Tsianos2012_strong} at the price of
slower rates of convergence. With diminishing stepsizes,
\cite{Jakovetic2013} shows an outer loop complexity of $O(1/k^2)$
under Nesterov's acceleration when the inner loop performs a
substantial search job, without which the rate reduces to
$O(\log(k)/k)$.

\subsection{Contribution and notation}
This paper studies the convergence of iteration \eqref{dec_grad} under the following assumptions.
\begin{assumption}\label{assmp1}
\begin{enumerate}
\item[a)] For $i=1,\ldots,n$, $f_i$ is proper closed convex, lower bounded, and Lipschitz differentiable with constant $L_{f_i}>0$. \item[b)] The network
has a synchronized clock in the sense that \eqref{dec_grad} is
applied to all the agents at the same time intervals, the network is
connected, and the mixing matrix $W$ is symmetric and doubly
stochastic with $\beta<1$ (see \eqref{beta} for the definition of $\beta$).
\end{enumerate}\end{assumption}

{Unlike
\cite{Duchi2012,Jakovetic2013,Nedic2009,Ram2010,Tsianos2012_strong},
{which {characterize} the ergodic convergence of
$f(\hat{x}_{(i)}(k))$ where
$\hat{x}_{(i)}(k)=\frac{1}{k}\sum_{s=0}^{k-1} {x}_{(i)}(s)$,   }
this paper establishes the {non-ergodic} convergence
of all local solution sequences $\{x_{(i)}(k)\}_{k\ge 0}$. In
addition,
the analysis in this paper does not assume bounded $\grad f_i$.
Instead, the following stepsize condition will ensure bounded
$\nabla f_i$: \beq\label{step_bnd} \alpha < O(1/L_h), \eeq where
$L_h = \max\{L_{f_1},\ldots,L_{f_n}\}$. This result is obtained
through interpreting the iteration \eqref{dec_grad} for all the
{agents} as a gradient descent iteration applied to a certain
Lyapunov function.}

{Under {Assumption \ref{assmp1}} and condition
\eqref{step_bnd}, the
 rate of $O(1/k)$ for ``near'' convergence is {shown}. Specifically, the
objective errors evaluated at the mean solution,
$f(\frac{1}{n}\sum_{i=1}^n x_{(i)}(k))-f^*$, and at any local
solution, $f({x}_{(i)}(k))-f^*$, both reduce at $O(1/k)$ until
reaching the level $O(\frac{\alpha}{1-\beta})$. The rate of the
mean solution is obtained by analyzing an inexact gradient descent
iteration, {somewhat} similar to
\cite{Duchi2012,Jakovetic2013,Nedic2009,Ram2010}. However, all of
their rates are given for the ergodic solution
$\hat{x}_{(i)}(k)=\frac{1}{k}\sum_{s=0}^{k-1} {x}_{(i)}(s)$. Our
rates are non-ergodic.}

In addition, a linear rate of ``near'' convergence is established if $f$ is
also strongly convex with modulus $\mu_f>0$, namely,
$$\langle\nabla f(x_a)-\nabla f(x_b), x_a-x_b \rangle
\ge \mu_f\|x_a-x_b\|^2,\quad \forall x_a,x_b\in\dom f,$$ or $f$ is
restricted strongly convex \cite{Lai2013} with modulus $\nu_f>0$,
\beq\label{rcvx}\langle\nabla f(x)-\nabla f(x^*), x-x^* \rangle
\ge \nu_f\|x-x^*\|^2,\quad \forall x\in\dom
f,~x^*=\Proj_{\cX^*}(x), \eeq where $\Proj_{\cX^*}(x)$ is the
projection of $x$ onto the  solution set $\cX^*$ and $\nabla
f(x^*)=0$. In both cases, we show that the mean solution error
$\|\frac{1}{n}\sum_{i=1}^n x_{(i)}(k)-x^*\|$ and the local
solution error $\|x_{(i)}(k)-x^*\|$  reduce geometrically until
reaching the level $O(\frac{\alpha}{1-\beta})$. Restricted
strongly convex functions are studied as they appear in the
applications of sparse optimization and statistical regression;
see \cite{ZhangYin2013} for some examples. The solution set
$\cX^*$ is a singleton if $f$ is strongly convex but not
necessarily so if $f$ is restricted strongly convex.

{Since our analysis uses a fixed stepsize, the local solutions will not be asymptotically consensual. To adapt our analysis to diminishing stepsizes, significant changes will be needed.}

Based on iteration  \eqref{dec_grad}, a decentralized algorithm is derived for the basis pursuit problem with distributed data to recover a sparse signal in Section
\ref{sec:3}. The algorithm converges linearly until reaching an $O(\frac{\alpha}{1-\beta})$-neighborhood of the  sparse signal.

Section \ref{sec:4} presents numerical results on the test
problems of decentralized least squares and decentralized basis
pursuit to verify our developed rates of convergence and the
levels of the landing neighborhoods.

Throughout the rest of this paper, we employ the following {notations} of stacked vectors:
$$[x_{(i)}]:=\begin{bmatrix}x_{(1)}\\x_{(2)}\\\vdots\\x_{(n)}\end{bmatrix}\in\RR^{np}\quad\text{and}\quad
h(k):=\begin{bmatrix}\nabla f_1(x_{(1)}(k))\\
\nabla f_2(x_{(2)}(k))\\\vdots\\ \nabla
f_n(x_{(n)}(k))\end{bmatrix}\in\RR^{np}.$$

\section{Convergence analysis}
\subsection{Bounded gradients}
\label{sec:2a}

Previous methods and analysis
\cite{Duchi2012,Jakovetic2013,Nedic2009,Ram2010,Tsianos2012_strong}
assume bound gradients or subgradients of $f_i$. The assumption
indeed plays a key role in the convergence analysis. For
decentralized gradient descent iteration \eqref{dec_grad}, it
gives \emph{bounded} deviation from mean $\|x_{(i)}(k) -
\frac{1}{n}\sum_{j=1}^n x_{(j)}(k)\|$. It is necessary in the convergence
analysis of subgradient methods, whether they are centralized or
decentralized. But as we show below,  the boundedness of $\nabla
f_i$  does not need to be guaranteed but is a consequence of bounded stepsize
$\alpha$, with dependence on the spectral properties of $W$. We
derive a tight bound on $\alpha$ for $\nabla f_i(x_{(i)}(k))$ to
be bounded.

\textbf{Example.} Consider $x\in\RR$ and a network formed by 3
connected agents (every pair of agents are directly linked).
Consider the following consensus optimization problem
$$ \Min_x\ \ f(x) = \sum_{i=1,2,3} f_i(x),\quad\text{where}~ f_i(x) = \frac{L_h}{2} (x-1)^2,$$
and $L_h>0$. This is a trivial average
consensus problem with $\nabla f_i(x_{(i)})=L_h(x_{(i)}-1)$ and $x^*=1$.
Take any $\tau \in (0,1/3)$ and let the mixing matrix be$$W =
\begin{bmatrix}1-2\tau & \tau & \tau\\ \tau & \tau & 1-2\tau\\
\tau & 1-2\tau & \tau\end{bmatrix},$$ which is symmetric doubly
stochastic. We have $\lambda_3(W)=3\tau - 1\in(-1,0)$. Start from
{$(x_{(1)},x_{(2)},x_{(3)})=(1,0,2)$}. Simple calculations
yield:
\begin{itemize}
\item if $\alpha < (1+\lambda_3(W))/L_h$, then $x_{(i)}(k)$
converges to $x^*$, $i=1,2,3$; (The consensus among $x_{(i)}(k)$
as $k\to\infty$ is due to design.) \item if $\alpha >
(1+\lambda_3(W))/L_h$, then $x_{(i)}(k)$ diverges and is
asymptotically unbounded where $i=1,2,3$; \item if $\alpha =
(1+\lambda_3(W))/L_h$, then $(x_{(1)}(k),x_{(2)}(k),x_{(3)}(k))$
equals $(1,2,0)$ at odd $k$ and $(1,0,2)$ at even $k$.
\end{itemize}
Clearly, if $x_{(i)}$ converges, then  $\nabla f_i(x_{(i)})$ converges and
thus stays bounded. In the above example $\alpha =
(1+\lambda_3(W))/L_h$ is the critical stepsize.

As each $\nabla f_i(x_{(i)})$ is Lipschitz continuous with
constant $L_{f_i}$,  $h(k)$ is Lipschitz continuous with constant
$$L_h =\max_{i}\{L_{f_i}\}.$$ We formally show that $\alpha <
(1+\lambda_n(W))/L_h$ ensures bounded $h(k)$. {The analysis
is based on the Lyapunov function
\beq\label{xi}\xi_\alpha([x_{(i)}]) :=
-\frac{1}{2}\sum_{i,j=1}^n w_{ij}x_{(i)}^Tx_{(j)}+\sum_{i=1}^n
\left(\frac{1}{2}\|x_{(i)}\|^2+\alpha f_i(x_{(i)})\right),\eeq
which is convex since all $f_i$ are convex and the
remaining terms $\frac{1}{2}\left(\sum_{i=1}^n \|x_{(i)}\|^2 -
\sum_{i,j=1}^n w_{ij}x_{(i)}^Tx_{(j)}\right)$ is also convex (and
uniformly nonnegative) due to {$\lambda_1(W)=1$}. In addition,
$\nabla \xi_{\alpha}$ is Lipschitz continuous with constant
$L_{\xi_\alpha}\le(1-\lambda_n(W))+{\alpha} L_h$. Rewriting
iteration \eqref{dec_grad} as
$$x_{(i)}(k+1) = \sum_{j=1}^n
w_{ij}x_{(j)}(k)-\alpha \nabla
f_i(x_{(i)}(k))=x_{(i)}(k)-\nabla_i\xi_{\alpha}([x_{(i)}(k)]),$$
we can observe that decentralized gradient
descent reduces to unit-stepsize centralized gradient
descent applied to minimize $\xi_\alpha([x_{(i)}])$.}

\begin{theorem}\label{h_bnd}
Under Assumption \ref{assmp1}, if the stepsize
\beq\label{stpbnd}\alpha \le (1+\lambda_n(W))/L_h,\eeq then,
starting from $x_{(i)}(0)=0$, ${i=1,2,\ldots,n}$, {{the}
sequence $x_{(i)}(k)$ generated by the iteration \eqref{dec_grad}
converges.} In addition we also have \beq\label{hk_bnd}
\|h(k)\|\le D:=\sqrt{2L_h  \left(\sum_{i=1}^n f_i(0) -{f^o}\right)}
\eeq for all $k=1,2,\ldots$, {where $f^o:=\sum_{i=1}^n f_i(x_{(i)}^o)$ and
$x_{(i)}^o=\arg\min_x f_i(x)$.}
\end{theorem}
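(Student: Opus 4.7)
The plan is to exploit the identification, already previewed in the text, that iteration \eqref{dec_grad} is exactly unit-stepsize gradient descent applied to the Lyapunov function $\xi_\alpha$ defined in \eqref{xi}. Stacking yields
\begin{align*}
\xi_\alpha([x_{(i)}])=\tfrac{1}{2}[x_{(i)}]^T\bigl((I_n-W)\otimes I_p\bigr)[x_{(i)}]+\alpha\sum_{i=1}^n f_i(x_{(i)}).
\end{align*}
Since every $f_i$ is convex and $(I_n-W)\otimes I_p\succeq 0$ (because $\lambda_1(W)=1$), $\xi_\alpha$ is convex. Its gradient is Lipschitz with constant at most $L_{\xi_\alpha}\le(1-\lambda_n(W))+\alpha L_h$: the Hessian of the quadratic part has operator norm $1-\lambda_n(W)$, while the block-separable stack $h$ is $L_h$-Lipschitz because each component $\nabla f_i$ only depends on its own block $x_{(i)}$.

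With this viewpoint I would apply the standard descent lemma for unit-stepsize gradient descent,
\begin{align*}
\xi_\alpha([x_{(i)}(k+1)])\le\xi_\alpha([x_{(i)}(k)])-\Big(1-\tfrac{L_{\xi_\alpha}}{2}\Big)\bigl\|\nabla\xi_\alpha([x_{(i)}(k)])\bigr\|^2.
\end{align*}
The hypothesis $\alpha\le(1+\lambda_n(W))/L_h$ forces $L_{\xi_\alpha}\le 2$, so $\xi_\alpha$ is monotonically nonincreasing along the iterates. Because $[x_{(i)}(0)]=0$ gives $\xi_\alpha([x_{(i)}(0)])=\alpha\sum_{i=1}^n f_i(0)$, and because the quadratic term in $\xi_\alpha$ is always nonnegative, monotone decrease collapses to the pointwise function-value bound
\begin{align*}
\sum_{i=1}^n f_i(x_{(i)}(k))\le\sum_{i=1}^n f_i(0)\qquad\text{for all }k\ge 0.
\end{align*}

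The gradient bound \eqref{hk_bnd} then follows from the classical inequality for a convex, $L$-Lipschitz-differentiable function with attained minimum, $\|\nabla f_i(x)\|^2\le 2L_{f_i}(f_i(x)-f_i(x_{(i)}^o))$. Applied to each $f_i$, using $L_{f_i}\le L_h$, and summed over $i$,
\begin{align*}
\|h(k)\|^2=\sum_{i=1}^n\|\nabla f_i(x_{(i)}(k))\|^2\le 2L_h\sum_{i=1}^n\bigl(f_i(x_{(i)}(k))-f_i(x_{(i)}^o)\bigr)\le 2L_h\Big(\sum_{i=1}^n f_i(0)-f^o\Big)=D^2.
\end{align*}
For iterate convergence, telescoping the descent inequality against the lower-bounded $\xi_\alpha$-values yields $\sum_k\|\nabla\xi_\alpha([x_{(i)}(k)])\|^2<\infty$; combined with convexity of $\xi_\alpha$ and boundedness of the iterates afforded by the $h$-bound and \eqref{dec_grad}, the standard argument for gradient descent on convex Lipschitz-differentiable objectives delivers convergence to a stationary point of $\xi_\alpha$.

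The main obstacle is really the first step: spotting \eqref{dec_grad} as gradient descent on the right convex potential and verifying that the spectral stepsize bound \eqref{stpbnd} is precisely the descent-lemma requirement $L_{\xi_\alpha}\le 2$. Once this Lyapunov perspective is in place, the $h(k)$-bound is a clean two-line consequence of monotone descent plus the convex gradient inequality, and iterate convergence is the standard postscript.
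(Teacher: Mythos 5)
Your proposal matches the paper's own proof essentially step for step: the same identification of \eqref{dec_grad} as unit-stepsize gradient descent on the Lyapunov function $\xi_\alpha$, the same descent-lemma monotonicity under $L_{\xi_\alpha}\le 2$, the same use of nonnegativity of the quadratic part to collapse $\alpha^{-1}\xi_\alpha([x_{(i)}(k)])\le\alpha^{-1}\xi_\alpha(0)$ into $\sum_{i=1}^n f_i(x_{(i)}(k))\le\sum_{i=1}^n f_i(0)$, and the same inequality $\|\nabla f_i(x)\|^2\le 2L_{f_i}(f_i(x)-f_i^o)$ to obtain \eqref{hk_bnd}. The only cosmetic difference is that the paper delegates convergence of the iterates to standard references while you sketch the telescoping argument explicitly.
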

\begin{proof}
{Note that the iteration \eqref{dec_grad} is equivalent to the
gradient descent iteration for the Lyapunov function \eqref{xi}.
From the classic analysis of gradient descent iteration in
\cite{bauschke2011convex} and \cite{Nesterov2007}, $[x_{(i)}(k)]$,
and hence $x_{(i)}(k)$, will converge to a certain point when
$\alpha \le (1+\lambda_n(W))/L_h$.}

Next we show \eqref{hk_bnd}. Since $\beta<1$, we have $\lambda_n(W)>-1$ and $(L_{\xi_\alpha}/2-
1)\le0$. Hence,
\begin{align*}\xi_\alpha([x_{(i)}(k+1)])
&\le \xi_\alpha([x_{(i)}(k)]) +\nabla\xi_\alpha([x_{(i)}(k)])^T([x_{(i)}(k+1) -x_{(i)}(k)])+\frac{L_{\xi_\alpha}}{2}\|[x_{(i)}(k+1) -x_{(i)}(k)]\|^2\\
&= \xi_\alpha([x_{(i)}(k)]) +(L_{\xi_\alpha}/2- 1)\|\nabla \xi_\alpha([x_{(i)}(k)])\|^2\\
&\le \xi_\alpha([x_{(i)}(k)]).
\end{align*}
Recall that $\frac{1}{2}\left(\sum_{i=1}^n \|x_{(i)}\|^2 -
\sum_{i,j=1}^n w_{ij}x_{(i)}^Tx_{(j)}\right)$ is nonnegative. Therefore,
we have \beq\label{fxibnd}\sum_{i=1}^n f_i(x_{(i)}(k))\le
\alpha^{-1}\xi_{\alpha}([x_{(i)}(k)])\le \cdots \le
\alpha^{-1}\xi_{\alpha}([x_{(i)}(0)])=\alpha^{-1}\xi_\alpha(0)=
\sum_{i=1}^n f_i(0). \eeq

On the other hand, for any differentiable convex function $g$ with the minimizer $x^*$
and Lipschitz constant $L_g$, we have $g(x_a)\ge g(x_b)+\nabla
g^T(x_b)(x_a-x_b)+\frac{1}{2L_g}\|\nabla g(x_a)-\nabla g(x_b)\|^2$
and $\nabla g(x^*)=0$.  Then, $\|\nabla g(x)\|^2\le 2
L_g(g(x)-g^*)$ where $g^*:=g(x^*)$. Applying this inequality and \eqref{fxibnd}, we
obtain
\begin{align}
\|h(k)\|^2 = \sum_{i=1}^n \|\nabla f_i(x_{(i)}(k))\|^2\le
{\sum_{i=1}^n
2L_{f_i} \left(f_i(x_{(i)}(k))-f_i^o\right)\le2L_h
\left(\sum_{i=1}^n f_i(0) -f^o\right),}
\end{align}
{where $f_i^o=f_i(x_{(i)}^o)$ and $x_{(i)}^o=\arg\min_x f_i(x)$. Note that $x_{(i)}^o$ exists because of Assumption \ref{assmp1}.
Besides, we denote $f^o=\sum_{i=1}^n  f_i^o$. }This completes
the proof.
\end{proof}

{In the above theorem, we choose $x_{(i)}(0)=0$ for
convenience. For general $x_{(i)}(0)$, a different bound for $\|h(k)\|$ can still
be obtained. Indeed, if $x_{(i)}(0)\neq 0$, then
$\alpha^{-1}\xi_\alpha(0)=\sum_{i=1}^n  f_i(0) +
\frac{1}{2\alpha} \big( \sum_{i=1}^n  \|x_{(i)}(0)\|^2 -
\sum_{i,j=1}^n w_{ij} x_{(i)}(0)^T x_{(j)}(0)\big)$ in \eqref{fxibnd}.
Hence we have $\|h(k)\|^2 \le 2L_h \big(\sum_{i=1}^n f_i(0)-f^o\big) +
\frac{L_h}{\alpha}\big(\sum_{i=1}^n \|x_{(i)}(0)\|^2 -
\sum_{i,j=1}^nw_{ij}x_{(i)}(0)^Tx_{(j)}(0)\big)$. The
initial values of $x_{(i)}(0)$ do not influence the stepsize
condition though they change the bound of gradient. For simplicity, we let
$x_{(i)}(0)=0$ in the rest of the paper.}

\textbf{Dependence on stepsize.} In \eqref{dec_grad}, the negative
gradient step $-\alpha\nabla f_i(x_{(i)})$ does not diminish at
$x_{(i)}=x^*$. Even if we let $x_{(i)}=x^*$ for all $i$, $x_{(i)}$
will immediately change once $\eqref{dec_grad}$ is applied.
Therefore, the term $-\alpha\nabla f_i(x_{(i)})$ prevents the
consensus of $x_{(i)}$. Even worse, because both terms in the
right-hand side of \eqref{dec_grad} change $x_{(i)}$, they can
possibly add up to an uncontrollable amount and cause $x_{(i)}(k)$
to diverge. The local averaging term is {stable itself}, so
the only choice we have is to limit the size of $-\alpha\nabla
f_i(x_{(i)})$ by bounding $\alpha$.

{\textbf{Network spectrum.} One can design $W$ so that
$\lambda_n(W) > 0$ and thus simply bound \eqref{stpbnd} to
$$\alpha \leq 1/L_h,$$ which no longer requires any spectral information of the underlying network. Given any mixing
matrix $\tilde{W}$ satisfying $1 = \lambda_1(\tilde{W})
> \lambda_2(\tilde{W}) \geq \cdots \geq \lambda_n(\tilde{W}) > -1$
(cf. \cite{Boyd2004}), one can design a new mixing matrix
$W=(\tilde{W}+I)/2$ that satisfies  $1 = \lambda_1(W)
> \lambda_2(W) \geq \cdots \geq \lambda_n(W) > 0$.
The same argument applies to the results throughout the paper.}

\subsection{Bounded deviation from mean}

Let $$\bar{x}(k) := \frac{1}{n}\sum_{i=1}^n x_{(i)}(k)$$ be the
\emph{mean} of $x_{(1)}(k),\ldots,x_{(n)}(k)$. We will later analyze the
error in terms of $\bar{x}(k)$ and then each $x_{(i)}(k)$. To enable
that analysis, we shall show that the deviation from mean
$\|x_{(i)}(k)-\bar{x}(k)\|$ is bounded uniformly over $i$ and $k$.
Then, any  bound of $\|\bar{x}(k)-x^*\|$ will  give a bound of
$\|x_{(i)}(k)-x^*\|$. Intuitively, if the deviation from mean is
unbounded,  then there would be no approximate consensus among
$x_{(1)}(k),\ldots,x_{(n)}(k)$. Without this approximate consensus,
descending individual $f_i(x_{(i)}(k))$ will not contribute to the
descent of $f(\bar{x}(k))$ and thus convergence is out of the
question. Therefore, it is critical to bound the deviation
$\|x_{(i)}(k)-\bar{x}(k)\|$.

\begin{lemma}
\label{lem:bnd_dev}
\label{bnd_dev} If \eqref{hk_bnd} holds and $\beta<1$, then the total deviation from mean is bounded, namely, $${{\|x_{(i)}(k) - \bar{x}(k)\| \le \frac{\alpha D}{1-\beta},\quad \forall k,\forall i.}}$$
\end{lemma}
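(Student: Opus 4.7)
The plan is to express everything in matrix form, unroll the recursion from the zero initial condition, and then isolate the component of the state orthogonal to the consensus subspace, where $W$ acts as a contraction with rate $\beta$.

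First I would stack the local iterates into $X(k)\in\RR^{n\times p}$ with $i$th row $x_{(i)}(k)^T$, and similarly stack the gradients into $H(k)$ with $i$th row $\nabla f_i(x_{(i)}(k))^T$. The iteration \eqref{dec_grad} becomes $X(k+1)=W X(k)-\alpha H(k)$, and since $x_{(i)}(0)=0$, unrolling gives
\begin{equation*}
X(k)=-\alpha\sum_{s=0}^{k-1} W^{k-1-s} H(s).
\end{equation*}
Let $J:=\frac{1}{n}\vone\vone^T$, so that the mean row $\bar{x}(k)^T=\frac{1}{n}\vone^T X(k)$ satisfies $\vone\bar{x}(k)^T=JX(k)$, and the matrix of deviations from mean is $(I-J)X(k)$, whose $i$th row is $(x_{(i)}(k)-\bar{x}(k))^T$.

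Next I would use the key algebraic identity $(I-J)W^t=W^t-J$, which follows because $W$ is doubly stochastic (so $JW=WJ=J$, and hence $JW^t=J$). Applying $(I-J)$ to the unrolled formula yields
\begin{equation*}
(I-J)X(k)=-\alpha\sum_{s=0}^{k-1}(W^{k-1-s}-J)H(s).
\end{equation*}
Because $W$ is symmetric doubly stochastic with second-largest eigenvalue magnitude $\beta$, the operator $W-J$ restricted to the orthogonal complement of $\vone$ has spectral norm $\beta$, so $\|W^t-J\|_2\le \beta^t$ (and $W^t-J$ also vanishes on $\vone$). Therefore, taking the Frobenius norm and using submultiplicativity $\|AB\|_F\le\|A\|_2\|B\|_F$,
\begin{equation*}
\|(I-J)X(k)\|_F\le \alpha\sum_{s=0}^{k-1}\beta^{k-1-s}\|H(s)\|_F.
\end{equation*}

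Finally, I would invoke Theorem~\ref{h_bnd}: since $\|H(s)\|_F^2=\sum_i\|\nabla f_i(x_{(i)}(s))\|^2=\|h(s)\|^2\le D^2$, summing the geometric series gives
\begin{equation*}
\|(I-J)X(k)\|_F\le \alpha D\sum_{s=0}^{k-1}\beta^{k-1-s}\le \frac{\alpha D}{1-\beta}.
\end{equation*}
The bound on any individual row $\|x_{(i)}(k)-\bar{x}(k)\|$ then follows since it is dominated by the full Frobenius norm.

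The main obstacle is conceptually modest but notational: one must make sure that the spectral bound applies in the right subspace. Specifically, the inequality $\|W^t-J\|_2\le\beta^t$ relies on $W$ being symmetric and on the observation that $W^t-J$ is zero on the span of $\vone$ and equal to $W^t$ on its orthogonal complement, so its eigenvalues are precisely $\{0,\lambda_2(W)^t,\dots,\lambda_n(W)^t\}$. Once this is in place, the rest is a straightforward geometric-series estimate.
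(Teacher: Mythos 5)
Your proof is correct and follows essentially the same route as the paper's: unroll the recursion from the zero initialization, subtract the consensus component using the doubly stochastic identity $JW^t=J$, bound $\|W^t-J\|$ by $\beta^t$, and sum the geometric series against $\|h(s)\|\le D$ from Theorem~\ref{h_bnd}. The only difference is notational — you work with the $n\times p$ matrix $X(k)$ and Frobenius norms where the paper uses the stacked vector $[x_{(i)}(k)]$ and Kronecker products, and these are equivalent formulations.
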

\begin{proof}
Recall the definition of $[x_{(i)}]$ and $h(k)$, from the equation
\eqref{dec_grad} we have
$$
[x_{(i)}(k+1)]=(W\otimes I)[x_{(i)}(k)] - \alpha h(k),
$$
where $\otimes$ denotes the Kronecker product. From it, we
obtain\beq [x_{(i)}(k)]=-\alpha \sum_{s=0}^{k-1}(W^{k-1-s}\otimes
I)h(s). \eeq {Besides, letting
$[\bar{\mathbf{x}}(k)]=[\bar{x}(k);\cdots;\bar{x}(k)]\in \RR^{np}$, it
follows that
$$
[\bar{x}(k)]=\frac{1}{n}((1_n 1_n^T) \otimes I)) [\bar{\mathbf{x}}(k)].
$$}
As a result,
\begin{align}
\|x_{(i)}(k)-\bar{x}(k)\| & \leq \|[x_{(i)}(k)] - [\bar{\mathbf{x}}(k)] \| \nonumber\\
& = \|[x_{(i)}(k)]-\frac{1}{n}((1_n 1_n^T) \otimes I)) [x_{(i)}(k)]\| \nonumber\\
& = \|-\alpha \sum\limits_{s=0}^{k-1} (W^{k-1-s} \otimes I) h(s) +
                             \alpha \sum\limits_{s=0}^{k-1} \frac{1}{n} ((1_n 1_n^T W^{k-1-s}) \otimes I) h(s)\| \nonumber \\
                     &=  \|-\alpha \sum\limits_{s=0}^{k-1} (W^{k-1-s} \otimes I) h(s) +
                             \alpha \sum\limits_{s=0}^{k-1} \frac{1}{n} ((1_n 1_n^T) \otimes I) h(s)\|  \label{W_disappear}\\
                     &=  \alpha \| \sum\limits_{s=0}^{k-1} ((W^{k-1-s}-\frac{1}{n} 1_n 1_n^T) \otimes I) h(s) \| \nonumber \\
                     &\leq  \alpha \sum\limits_{s=0}^{k-1} \|W^{k-1-s}-\frac{1}{n} 1_n 1_n^T\| \| h(s)\| \nonumber \\
                     &=     \alpha \sum\limits_{s=0}^{k-1} \beta^{k-1-s} \| h(s)\|, \nonumber
\end{align}
where \eqref{W_disappear} holds since $W$ is doubly stochastic.
From $\|h(k)\|\le D$ and $\beta < 1$, it follows that
$$\|x_{(i)}(k)-\bar{x}(k)\| \leq \alpha \sum\limits_{s=0}^{k-1} \beta^{k-1-s} \| h(s)\| \leq \alpha \sum\limits_{s=0}^{k-1} \beta^{k-1-s} D \leq \frac{\alpha D}{1-\beta},$$
which completes the proof.
\end{proof}

{The proof of Lemma \ref{lem:bnd_dev} utilizes the spectral
property of the mixing matrix $W$. The constant in the upper bound is proportional to the stepsize $\alpha$ and
monotonically increasing with respect to the second largest
eigenvalue modulus $\beta$. The papers \cite{Duchi2012},
\cite{Nedic2009}, and \cite{Ram2010} also analyze the deviation of local solutions from their
mean, but their results are different.
The upper bound in \cite{Duchi2012} is given at the
termination time of the algorithm, which is not uniform in $k$. The two papers \cite{Nedic2009} and
\cite{Ram2010}, instead of bounding $\|W-\frac{1}{n}\mathbf{11}^T\|$, decompose it as the sum of element-wise $|w_{ij}-\frac{1}{n}|$ and then bounds it with the minimum nonzero element in
$W$.

}


{As discussed after Theorem \ref{h_bnd}, $D$ is affected by the
value of $x_{(i)}(0)$, if it is nonzero. In Lemma
\ref{lem:bnd_dev}, if $x_{(i)}(0)\neq 0$, then $[x_{(i)}(k)]=(W^k
\otimes I)[x_{(i)}(0)] -\alpha \sum_{s=0}^{k-1}(W^{k-1-s}\otimes
I)h(s)$. Substituting it into the proof of Lemma \ref{lem:bnd_dev}
we obtain
$$\|x_{(i)}(k)-\bar{x}(k)\| \leq \beta^k \|[x_{(i)}(0)]\| + \frac{\alpha D}{1-\beta}.$$
When $k \rightarrow \infty$, $\beta^k \|[x_{(i)}(0)]\| \rightarrow
0$ and, therefore, the last term dominates.}


A consequence of Lemma \ref{lem:bnd_dev} is that the distance
between the following two quantities is also bounded
\begin{align*}
g(k) & := \frac{1}{n}\sum_{i=1}^n  \nabla f_i(x_{(i)}(k)), \\
\bar{g}(k)&:= \frac{1}{n}\sum_{i=1}^n  \nabla f_i(\bar{x}(k)).
\end{align*}

{
\begin{lemma}
\label{bnd_g}
Under Assumption \ref{assmp1}, if \eqref{hk_bnd} holds and $\beta<1$, then
\begin{align*}
\|\nabla f_i(x_{(i)}(k))-\nabla f_i(\bar{x}(k))\|&\le \frac{\alpha DL_{f_i}}{1-\beta},\\
\|g(k)-\bar{g}(k)\|&\le \frac{\alpha DL_h}{1-\beta}.
\end{align*}
\end{lemma}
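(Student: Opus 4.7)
The plan is to derive both inequalities as direct consequences of Lipschitz continuity of $\nabla f_i$ combined with the deviation-from-mean bound of Lemma \ref{lem:bnd_dev}. No new spectral arguments are needed; everything is pointwise in $k$.

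First, I would handle the per-agent bound. By Assumption \ref{assmp1}(a), each $\nabla f_i$ is Lipschitz with constant $L_{f_i}$, so
\begin{equation*}
\|\nabla f_i(x_{(i)}(k))-\nabla f_i(\bar{x}(k))\|\le L_{f_i}\,\|x_{(i)}(k)-\bar{x}(k)\|.
\end{equation*}
Since Theorem \ref{h_bnd} gives $\|h(k)\|\le D$ and $\beta<1$, the hypotheses of Lemma \ref{lem:bnd_dev} are met, yielding $\|x_{(i)}(k)-\bar{x}(k)\|\le \alpha D/(1-\beta)$. Substituting this into the Lipschitz estimate immediately gives the first inequality.

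Next, for the mean-gradient bound, I would write
\begin{equation*}
g(k)-\bar{g}(k) = \frac{1}{n}\sum_{i=1}^n \bigl(\nabla f_i(x_{(i)}(k))-\nabla f_i(\bar{x}(k))\bigr),
\end{equation*}
apply the triangle inequality, and then use the first part of the lemma to each summand. This produces
\begin{equation*}
\|g(k)-\bar{g}(k)\|\le \frac{1}{n}\sum_{i=1}^n \frac{\alpha D L_{f_i}}{1-\beta}\le \frac{\alpha D L_h}{1-\beta},
\end{equation*}
where the last inequality uses $\frac{1}{n}\sum_i L_{f_i}\le \max_i L_{f_i} = L_h$.

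There is no real obstacle; both claims follow from stringing together three ingredients already in hand (Assumption \ref{assmp1}(a), Theorem \ref{h_bnd}, and Lemma \ref{lem:bnd_dev}). The only thing to be careful about is keeping track of whether one uses $L_{f_i}$ or $L_h$ and ensuring the triangle inequality is applied before the Lipschitz step so that the uniform bound $\alpha D/(1-\beta)$ can be pulled out of each term.
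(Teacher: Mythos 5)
Your proof is correct and follows essentially the same route as the paper: Lipschitz continuity of each $\nabla f_i$ combined with the deviation bound of Lemma \ref{lem:bnd_dev} for the first inequality, then averaging, the triangle inequality, and $\frac{1}{n}\sum_i L_{f_i}\le L_h$ for the second. (Minor remark: the lemma already assumes \eqref{hk_bnd} as a hypothesis, so you need not re-invoke Theorem \ref{h_bnd} to justify it.)
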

}
\begin{proof}
Assumption \ref{assmp1} gives
$$\|\nabla f_i(x_{(i)}(k))-\nabla f_i(\bar{x}(k))\| \le L_{f_i}\|x_{(i)}(k)-\bar{x}(k)\| \le \frac{\alpha DL_{f_i}}{1-\beta},$$
where the last inequality follows from Lemma \ref{lem:bnd_dev}. On the other hand, we have
$$\|g(k)-\bar{g}(k)\|=\|\frac{1}{n}\sum_{i=1}^n  \big(\nabla f_i(x_{(i)}(k)) - \nabla f_i(\bar{x}(k))\big)\|\le \frac{1}{n}\sum_{i=1}^n  L_{f_i}\|x_{(i)}(k)-\bar{x}(k)\|\le \frac{\alpha DL_h}{1-\beta},$$
which completes the proof.
\end{proof}

We are interested in $g(k)$ since $-\alpha g(k)$ updates the
average of $x_{(i)}(k)$. To see this, by taking the average of
\eqref{dec_grad} over $i$ and noticing $W=[w_{ij}]$ is doubly
stochastic, we obtain \beq\label{dec_avg}
\bar{x}(k+1)=\frac{1}{n}\sum_{i=1}^nx_{(i)}(k+1) =
\frac{1}{n}{\sum_{i,j=1}^n} w_{ij}x_{(j)} -
\frac{\alpha}{n}\sum_{i=1}^n \nabla
f_i(x_{(i)}(k))=\bar{x}(k)-\alpha g(k). \eeq On the other hand,
since the exact gradient of $\frac{1}{n}\sum_{i=1}^n
f_i(\bar{x}(k))$ is $\bar{g}(k)$, iteration \eqref{dec_avg} can be
viewed as an inexact gradient descent iteration (using $g(k)$
instead of $\bar{g}(k)$) for the problem
\beq\label{avg_prob}\Min_x~\bar{f}(x):=\frac{1}{n}\sum_{i=1}^n
f_i(x).\eeq It is easy to see that $\bar{f}$ is Lipschitz
continuous with the constant $$L_{\bar{f}}=\frac{1}{n}\sum_{i=1}^n
L_{f_i}.$$ If any $f_i$ is strongly convex, then so is $\bar{f}$,
with the modulus $\mu_{\bar{f}}=\frac{1}{n}\sum_{i=1}^n
\mu_{f_i}$.  Based on the above interpretation, next we bound
$f(\bar{x}(k))-f^*$ and $\|\bar{x}(k)-x^*\|$.

\subsection{Bounded distance to minimum}\label{sc:bdm}
We consider the convex,  restricted strongly convex, and strongly
convex cases. In the former two cases, the solution $x^*$ may be
non-unique, so we use the set of solutions $\cX^*$. We need the
followings for our analysis:
\begin{itemize}
\item  objective error $\bar{r}(k) : =
\bar{f}(\bar{x}(k))-\bar{f}^*=\frac{1}{n}(f(\bar{x}(k))-f^*)~\text{where}~
\bar{f}^*:=\bar{f}(x^*)$, $x^*\in\cX^*$; \item solution error $\bar{e}(k) :=
\bar{x}(k)-x^*(k)~\text{where}~x^*(k)=\Proj_{\cX^*}(\bar{x}(k))\in\cX^*.$
\end{itemize}

\begin{theorem}\label{r_cvg} Under Assumption \ref{assmp1}, if  $\alpha\le \min\{(1+\lambda_n(W))/L_h,1/L_{\bar{f}}\}=O(1/L_h)$, then while
$$\bar{r}(k) > C\sqrt{2}\cdot \frac{\alpha L_h D}{(1-\beta)}=O\left(\frac{\alpha}{1-\beta}\right)$$ (where constants $C$ and $D$ are defined in \eqref{def_c} and \eqref{hk_bnd}, respectively), the reduction of $\bar{r}(k)$ obeys
$$\bar{r}(k+1) \le \bar{r}(k) -O(\alpha\bar{r}^2(k)),$$
and therefore, $$\bar{r}(k)\le O\left(\frac{1}{\alpha k}\right).$$
In other words, $\bar{r}(k)$ decreases at a minimal rate of
$O(\frac{1}{\alpha k})=O(1/k)$ until reaching
$O(\frac{\alpha}{1-\beta})$.
\end{theorem}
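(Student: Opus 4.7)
The plan is to view the mean iteration $\bar{x}(k+1)=\bar{x}(k)-\alpha g(k)$ (from \eqref{dec_avg}) as an inexact gradient descent on $\bar f$, where the gradient error $\epsilon(k):=g(k)-\bar g(k)$ is controlled by Lemma \ref{bnd_g}, namely $\|\epsilon(k)\|\le \alpha D L_h/(1-\beta)$. Since $\alpha\le 1/L_{\bar f}$, I will apply the standard descent lemma for $L_{\bar f}$-smooth functions to $\bar f$ and then peel off the perturbation cleanly.

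First I would write
\begin{align*}
\bar f(\bar x(k+1)) - \bar f(\bar x(k))
&\le -\alpha\langle \bar g(k), g(k)\rangle + \tfrac{L_{\bar f}\alpha^2}{2}\|g(k)\|^2\\
&\le -\alpha\langle \bar g(k), g(k)\rangle + \tfrac{\alpha}{2}\|g(k)\|^2,
\end{align*}
and then use the algebraic identity $-\alpha\langle \bar g,g\rangle+\tfrac{\alpha}{2}\|g\|^2 = \tfrac{\alpha}{2}\|g-\bar g\|^2-\tfrac{\alpha}{2}\|\bar g\|^2$ to obtain the clean per-step bound
\[
\bar r(k+1)\;\le\;\bar r(k)\;-\;\tfrac{\alpha}{2n}\|\bar g(k)\|^2\;+\;\tfrac{\alpha}{2n}\|\epsilon(k)\|^2.
\]
Substituting the bound on $\|\epsilon(k)\|$ from Lemma \ref{bnd_g} turns the last term into $O\!\left(\alpha^3 L_h^2 D^2/(1-\beta)^2\right)$.

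Next I would lower bound $\|\bar g(k)\|$ in terms of $\bar r(k)$. Convexity of $\bar f$ gives $\bar r(k)\le \langle \bar g(k),\bar e(k)\rangle\le \|\bar g(k)\|\,\|\bar e(k)\|$, so if $C$ denotes the (uniform in $k$) bound on $\|\bar e(k)\|$ referenced in the statement, then $\|\bar g(k)\|^2\ge \bar r(k)^2/C^2$. Plugging this in,
\[
\bar r(k+1)\;\le\;\bar r(k)\;-\;\tfrac{\alpha}{2C^2}\bar r(k)^2\;+\;\tfrac{\alpha^3 L_h^2 D^2}{2(1-\beta)^2}.
\]
In the regime $\bar r(k)>C\sqrt{2}\,\alpha L_h D/(1-\beta)$, the first quadratic term dominates twice the perturbation, so it absorbs the perturbation and leaves
\[
\bar r(k+1)\;\le\;\bar r(k)\;-\;\tfrac{\alpha}{4C^2}\bar r(k)^2
\;=\;\bar r(k)-O(\alpha)\,\bar r(k)^2,
\]
which is the first conclusion. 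The $O(1/(\alpha k))$ rate then follows by the standard trick of dividing by $\bar r(k)\bar r(k+1)$ and telescoping $1/\bar r(k+1)-1/\bar r(k)\ge \Omega(\alpha)$.

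The main obstacle I anticipate is justifying the uniform bound $\|\bar e(k)\|\le C$ needed to convert the convexity inequality into a lower bound on $\|\bar g\|$ in terms of $\bar r$. Theorem \ref{h_bnd} yields uniform boundedness of $[x_{(i)}(k)]$ through the Lyapunov function $\xi_\alpha$, and Lemma \ref{bnd_dev} guarantees bounded deviation from mean, so $\bar x(k)$ itself is bounded; projecting onto $\cX^*$ then yields such a $C$. The rest is bookkeeping on the constants and verifying that the stepsize condition $\alpha\le\min\{(1+\lambda_n(W))/L_h,1/L_{\bar f}\}$ is exactly what is needed both to invoke Theorem \ref{h_bnd} (for the gradient bound $D$) and to obtain $\tfrac{L_{\bar f}\alpha^2}{2}\le \tfrac{\alpha}{2}$ above.
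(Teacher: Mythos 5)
Your proposal is correct and follows essentially the same route as the paper: the descent lemma on $\bar f$ for the inexact iteration $\bar x(k+1)=\bar x(k)-\alpha g(k)$, the error term controlled via Lemma \ref{bnd_g}, the lower bound $\|\bar g(k)\|\ge \bar r(k)/C$ from convexity and the Lyapunov-function-based bound $\|\bar e(k)\|\le C$, and the standard telescoping of $1/\bar r(k)$. The only differences are cosmetic: you use the exact identity $-\alpha\langle\bar g,g\rangle+\tfrac{\alpha}{2}\|g\|^2=\tfrac{\alpha}{2}\|g-\bar g\|^2-\tfrac{\alpha}{2}\|\bar g\|^2$ where the paper uses Young's inequality with $\delta=1$ (yielding the identical bound), and the stray $1/n$ in your intermediate display is a typo that does not propagate.
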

\begin{proof}
{First we show that $\|\bar{e}(k)\| \leq C$. To this end, recall
the definition of $\xi_\alpha([x_{(i)}])$ in \eqref{xi}. Let
{$\tilde{\cX}$} denote its set of minimizer(s), which is
nonempty since each $f_i$ has a minimizer due to Assumption
\ref{assmp1}. Following the arguments in \cite[pp.
69]{Nesterov2007} and with the bound on $\alpha$, we have $d(k)\le
d(k-1)\le\cdots\le d(0)$, where $d(k):=\|[x_{(i)}(k)-
{\tilde{x}_{(i)}}]\|$ and {$[\tilde{x}_{(i)}]\in
\tilde{\cX}$}. Using $\|a_1+\cdots +a_n\|\le
\sqrt{n}\|{[a_1;\ldots;a_n]}\|$, we have
\begin{align}
\nonumber\|\bar{e}(k)\|&= \|\bar{x}(k)-x^*(k)\| = \|\frac{1}{n}\sum_{i=1}^n (x_{(i)}(k) - x^*)\| \le \frac{1}{\sqrt{n}}\|[x_{(i)}(k)-{x}^*]\|\\
\nonumber& \le \frac{1}{\sqrt{n}}(\|{[x_{(i)}(k)-\tilde{x}_{(i)}]}\|+\|{[\tilde{x}_{(i)} - x^*]}\|)\\
& \le \frac{1}{\sqrt{n}} (\|{[x_{(i)}(0) -
\tilde{x}_{(i)}]}\|+\|{[\tilde{x}_{(i)} -
x^*]}\|)=:C\label{def_c}
\end{align}

Next we show the convergence of $\bar{r}(k)$. By the assumption, we
have $1-\alpha L_{\bar{f}}\ge 0$, and thus
\begin{align*}
\bar{r}(k+1) &\le \bar{r}(k) +\langle \bar{g}(k),\bar{x}({k+1})-\bar{x}(k)\rangle +\frac{L_{\bar{f}}}{2}\|\bar{x}({k+1})-\bar{x}(k)\|^2\\
&\stackrel{\eqref{dec_avg}}{=} \bar{r}(k) - \alpha \langle \bar{g}(k),g(k)\rangle + \frac{\alpha^2 L_{\bar{f}}}{2}\|g(k)\|^2\\
&= \bar{r}(k) - \alpha \langle \bar{g}(k),\bar{g}(k)\rangle+ \frac{\alpha^2 L_{\bar{f}}}{2}\|\bar{g}(k)\|^2+ 2\alpha\frac{1-\alpha L_{\bar{f}}}{2} \langle \bar{g}(k),\bar{g}(k)-g(k)\rangle+\frac{\alpha^2 L_{\bar{f}}}{2}\|\bar{g}(k)-g(k)\|^2\\
&\le \bar{r}(k)-\alpha(1-\frac{\alpha L_{\bar{f}}}{2}-\delta\frac{1-\alpha L_{\bar{f}}}{2}) \|\bar{g}(k)\|^2+\alpha(\frac{\alpha L_{\bar{f}}}{2}+\delta^{-1}\frac{1-\alpha L_{\bar{f}}}{2})\|\bar{g}(k)-g(k)\|^2,
\end{align*}
where the last inequality follows from Young's inequality $\pm2a^Tb\le
\delta^{-1}\|a\|^2+ \delta\|b\|^2$ for any $\delta >0$. Although
we can later optimize over $\delta>0$, we simply take
$\delta = 1$. Since $\alpha\le (1+\lambda_n(W))/L_h$, we can apply
Theorem \ref{h_bnd} and then Lemma \ref{bnd_g} to the last term
above, and obtain
$$\bar{r}(k+1) \le \bar{r}(k) -\frac{\alpha}{2} \|\bar{g}(k)\|^2+\frac{\alpha^3 D^2L_h^2}{2(1-\beta)^2}.$$
Since $\|\bar{e}(k)\|\le C$ as shown in \eqref{def_c}, from $\bar{r}(k) =\bar{f}(\bar{x}(k))-\bar{f}^*\le \langle \bar{g}(k),\bar{x}(k)-x^*(k)\rangle=\langle \bar{g}(k),\bar{e}(k)\rangle$, we obtain that
$$\|\bar{g}(k)\|\ge \|\bar{g}(k)\|\frac{\|\bar{e}(k)\|}{C} \ge\frac{|\langle\bar{g}(k),\bar{e}(k)\rangle|}{C}\ge\frac{\bar{r}(k)}{C}, $$
which gives
$$\bar{r}(k+1) \le \bar{r}(k) -\frac{\alpha}{2C^2} \bar{r}^2(k)+\frac{\alpha^3 D^2L_h^2}{2(1-\beta)^2}.$$ Hence, while $\frac{\alpha}{2C^2} \bar{r}^2(k)> 2 \cdot \frac{\alpha^3 D^2L_h^2}{2(1-\beta)^2}$ or equivalently $\bar{r}(k) >  C\sqrt{2}\cdot \frac{\alpha L_h D}{(1-\beta)}$, we have $\bar{r}(k+1) \le \bar{r}(k)- O(\alpha\bar{r}^2(k))$. Dividing both sides by $\bar{r}(k)\bar{r}(k+1)$ gives $\frac{1}{\bar{r}(k)}+O(\frac{\alpha\bar{r}(k)}{\bar{r}(k+1)})\le \frac{1}{\bar{r}(k+1)}$. Hence, $\frac{1}{\bar{r}(k)}$ increase at $\Omega(\alpha k)$, or $\bar{r}(k)$ reduces at $O(1/(\alpha k))$, which completes the proof.}
\end{proof}

Theorem \ref{r_cvg} shows that until reaching
$f^*+O(\frac{\alpha}{1-\beta})$, $f(\bar{x}(k))$ reduces at the
rate of $O(1/(\alpha k))$. For fixed $\alpha$, there is a tradeoff
between the convergence rate and optimality. Again, upon the stopping
of iteration \eqref{dec_grad}, $\bar{x}(k)$ is not available to
any of  the agents but obtainable by invoking an average
consensus algorithm.

{
\begin{remark}
Since $\bar{f}(x)$ is convex, we have for all $i=1,2,\ldots,n$:
\begin{align}
\bar{f}(x_{(i)}(k)) - \bar{f}^* & \leq \bar{r}(k) + \langle \bar{g}(x_{(i)}(k)), x_{(i)}(k) - \bar{x}(k) \rangle \nonumber \\
& \leq \bar{r}(k) + \frac{1}{n}\sum_{j=1}^n \|\nabla f_j(x_{(i)}(k))\| \|x_{(i)}(k)) - \bar{x}(k)\| \nonumber \\
& \leq \bar{r}(k) + \frac{\alpha D^2}{1-\beta}. \nonumber
\end{align}
From Theorem \ref{r_cvg} we conclude that $\bar{f}(x_{(i)}(k)) -
\bar{f}^*$, like $\bar{r}(k)$, converges at $O(1/k)$ until
reaching $O(\frac{\alpha}{1-\beta})$.

This nearly sublinear convergence rate is stronger than those of
the distributed subgradient method \cite{Nedic2009} and the dual
averaging subgradient method \cite{Duchi2012}. Their rates are in
terms of objective error $f(\hat{x}_{(i)}(k))-f^*$ evaluated at the ergodic
solution
$\hat{x}_{(i)}(k)=\frac{1}{k}\sum_{s=0}^{k-1}x_{(i)}(s)$.
\end{remark}
}


Next, we bound $\|\bar{e}(k+1)\|$ under the assumption of restricted or standard strong convexities. To start, we
present a lemma.
\begin{lemma}\label{sc_bnd}Suppose that $\nabla\bar{f}$ is Lipschitz continuous with constant $L_{\bar{f}}$. Then, we have
$$\langle x-x^*, \grad\bar{f}(x) - \grad\bar{f}(x^*) \rangle \geq c_1\|\grad\bar{f}(x) - \grad\bar{f}(x^*)\|^2 +c_2 \|x-x^*\|^2$$
(where $x^*\in \cX^*$ and $\nabla\bar{f}(x^*)=0$) for the following cases:
\begin{enumerate}[a)]
\item (\cite[Theorem 2.1.12]{Nesterov2007}) if ${\bar{f}}$ is
strongly convex with modulus $\mu_{\bar{f}}$, then
$c_1=\frac{1}{\mu_{\bar{f}}+L_{\bar{f}}}$ and
$c_2=\frac{\mu_{\bar{f}}L_{\bar{f}}}{\mu_{\bar{f}}+L_{\bar{f}}}$;
\item (\cite[Lemma 2]{ZhangYin2013}) if ${\bar{f}}$ is restricted
strongly convex with modulus $\nu_{\bar{f}}$, then
$c_1=\frac{\theta}{L_{\bar{f}}}$ and $c_2=(1-\theta)\nu_{\bar{f}}$
for any $\theta\in[0,1]$.
\end{enumerate}
\end{lemma}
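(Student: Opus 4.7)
The plan is to verify both inequalities directly from the respective convexity assumption combined with the Lipschitz condition. Part (a) is classical and part (b) is a straightforward convex combination argument, so I would treat them separately.

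For part (a), the strategy is to apply the standard ``strong convexity minus quadratic'' reduction. I would define the auxiliary function $\phi(x) := \bar{f}(x) - \tfrac{\mu_{\bar{f}}}{2}\|x\|^2$. Because $\bar{f}$ is $\mu_{\bar{f}}$-strongly convex with $L_{\bar{f}}$-Lipschitz gradient, $\phi$ is convex and $\nabla\phi$ is $(L_{\bar{f}}-\mu_{\bar{f}})$-Lipschitz, hence by the Baillon--Haddad co-coercivity inequality
\[
\langle \nabla\phi(x)-\nabla\phi(x^*),\,x-x^*\rangle \ge \tfrac{1}{L_{\bar f}-\mu_{\bar f}}\,\|\nabla\phi(x)-\nabla\phi(x^*)\|^2.
\]
Substituting $\nabla\phi(x)=\nabla\bar{f}(x)-\mu_{\bar{f}}x$ on both sides, expanding the square, and collecting the terms containing $\langle\nabla\bar{f}(x)-\nabla\bar{f}(x^*),x-x^*\rangle$, $\|\nabla\bar{f}(x)-\nabla\bar{f}(x^*)\|^2$, and $\|x-x^*\|^2$, one obtains precisely
\[
\langle \nabla\bar{f}(x)-\nabla\bar{f}(x^*),\,x-x^*\rangle \ge \tfrac{1}{\mu_{\bar f}+L_{\bar f}}\|\nabla\bar{f}(x)-\nabla\bar{f}(x^*)\|^2 + \tfrac{\mu_{\bar f}L_{\bar f}}{\mu_{\bar f}+L_{\bar f}}\|x-x^*\|^2,
\]
which is the claim, reproducing Theorem 2.1.12 of \cite{Nesterov2007}.

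For part (b), the idea is to take a convex combination of two simpler bounds. The Lipschitz continuity of $\nabla\bar{f}$ alone yields the co-coercivity inequality
\[
\langle \nabla\bar{f}(x)-\nabla\bar{f}(x^*),\,x-x^*\rangle \ge \tfrac{1}{L_{\bar f}}\|\nabla\bar{f}(x)-\nabla\bar{f}(x^*)\|^2,
\]
while the restricted strong convexity \eqref{rcvx} gives (using that $x^*=\Proj_{\cX^*}(x)$ and $\nabla\bar{f}(x^*)=0$)
\[
\langle \nabla\bar{f}(x)-\nabla\bar{f}(x^*),\,x-x^*\rangle \ge \nu_{\bar f}\|x-x^*\|^2.
\]
Multiplying the first by $\theta$ and the second by $(1-\theta)$ and summing yields the required bound with $c_1=\theta/L_{\bar f}$ and $c_2=(1-\theta)\nu_{\bar f}$.

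The main subtlety is simply making sure the co-coercivity inequality is applicable in each case. In part (a) the reduction to the convex function $\phi$ relies on having $L_{\bar f}>\mu_{\bar f}$ (the edge case is trivial), and in part (b) one must remember that restricted strong convexity is written relative to the projection onto $\cX^*$, so the identity $\nabla\bar{f}(x^*)=0$ is what allows the Lipschitz-based co-coercivity to be combined with it cleanly. Since both inequalities are already established in the cited references, I would just reproduce the two short arguments above and cite \cite{Nesterov2007,ZhangYin2013} for completeness.
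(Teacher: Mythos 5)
Your proof is correct and simply reproduces the standard arguments behind the two cited results (the paper itself offers no proof of this lemma, only the citations to \cite{Nesterov2007} and \cite{ZhangYin2013}): the quadratic-shift reduction for the strongly convex case and the $\theta$-weighted convex combination of co-coercivity with restricted strong convexity for the other. The one point to state explicitly is that the Baillon--Haddad co-coercivity inequality invoked in both parts requires convexity of the function (of $\phi$ in part (a), of $\bar f$ in part (b)), not Lipschitz continuity of the gradient alone; convexity is available here because Assumption \ref{assmp1} makes each $f_i$ convex, but the phrase ``the Lipschitz continuity of $\nabla\bar{f}$ alone yields the co-coercivity inequality'' should be amended accordingly.
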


{{\begin{theorem} \label{mean convg} Under Assumption
\ref{assmp1}, if $f$ is either strongly convex with modulus
$\mu_f$ or restricted strongly convex with modulus $\nu_f$, and if
 $\alpha \le \min\{(1 + \lambda_n(W))/L_h, c_1\}=O(1/L_h)$
and $\beta<1$, then we have
$$\|\bar{e}(k+1)\|^2  \le c_3^2  \|\bar{e}(k)\|^2 + c_4^2,$$
where
$$c_3^2=1 - \alpha c_2+ \alpha\delta - \alpha^2 \delta c_2, \quad c_4^2=\alpha^3(\alpha+\delta^{-1}) \frac{L_h^2 D^2}{(1-\beta)^2},
\quad D=\sqrt{2L_h  \sum_{i=1}^n \left(f_i(0) -f_i^o\right)},$$ constants
$c_1$ and $c_2$ are given in Lemma \ref{sc_bnd},
$\mu_{\bar{f}}=\mu_{f}/n$ and $\nu_{\bar{f}}=\nu_{f}/n$, and
$\delta$ is any positive constant. In particular, if we set
$\delta=\frac{c_2}{2(1-\alpha c_2)}$ such that
$c_3=\sqrt{1-\frac{\alpha c_2}{2}} \in (0,1)$, then we have
$$\|\bar{e}(k)\|\le c_3^{k} \|\bar{e}(0)\|+O(\frac{\alpha}{1-\beta}).$$
\end{theorem}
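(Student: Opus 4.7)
The plan is to treat the mean iteration $\bar{x}(k+1)=\bar{x}(k)-\alpha g(k)$ from \eqref{dec_avg} as an inexact gradient step on $\bar{f}$ and to propagate the squared solution error $\|\bar{e}(k)\|^2$ through a one-step recursion. First I would write $\bar{e}(k+1)=\bigl(\bar{e}(k)-\alpha\bar{g}(k)\bigr)-\alpha\bigl(g(k)-\bar{g}(k)\bigr)$, square it, and apply Young's inequality with parameter $\alpha\delta$ to decouple the exact-gradient drift from the perturbation:
\begin{align*}
\|\bar{e}(k+1)\|^2 \le (1+\alpha\delta)\,\|\bar{e}(k)-\alpha\bar{g}(k)\|^2 + \bigl(1+(\alpha\delta)^{-1}\bigr)\alpha^2\,\|g(k)-\bar{g}(k)\|^2.
\end{align*}
Lemma \ref{bnd_g} then controls $\|g(k)-\bar{g}(k)\|\le \alpha D L_h/(1-\beta)$, and the second summand collapses to exactly the stated $c_4^2 = \alpha^3(\alpha+\delta^{-1})\,L_h^2 D^2/(1-\beta)^2$.

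Second, I would establish the contraction $\|\bar{e}(k)-\alpha\bar{g}(k)\|^2 \le (1-\alpha c_2)\|\bar{e}(k)\|^2$. Because $\bar{g}(k)=\nabla\bar{f}(\bar{x}(k))$ and $\nabla\bar{f}(x^*)=0$, Lemma \ref{sc_bnd} yields $\langle \bar{e}(k),\bar{g}(k)\rangle \ge c_1\|\bar{g}(k)\|^2 + c_2\|\bar{e}(k)\|^2$ in either the strongly convex or restricted strongly convex case. Expanding the square and using only one copy of this inequality gives
\begin{align*}
\|\bar{e}(k)-\alpha\bar{g}(k)\|^2 \le (1-\alpha c_2)\|\bar{e}(k)\|^2 + \alpha(\alpha-c_1)\|\bar{g}(k)\|^2,
\end{align*}
and the stepsize restriction $\alpha \le c_1$ makes the $\|\bar{g}(k)\|^2$ term nonpositive so it can be dropped. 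Multiplying by $(1+\alpha\delta)$ produces $c_3^2 = (1-\alpha c_2)(1+\alpha\delta)$, which matches the stated expression, and combining with the perturbation bound yields the one-step recurrence $\|\bar{e}(k+1)\|^2 \le c_3^2\|\bar{e}(k)\|^2 + c_4^2$.

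Finally, choosing $\delta = c_2/[2(1-\alpha c_2)]$ makes $c_3^2 = 1-\alpha c_2/2 \in (0,1)$, and iterating the recursion from $k=0$ gives $\|\bar{e}(k)\|^2 \le c_3^{2k}\|\bar{e}(0)\|^2 + c_4^2/(1-c_3^2)$. Taking square roots via $\sqrt{a+b}\le\sqrt{a}+\sqrt{b}$ produces the claimed bound $\|\bar{e}(k)\|\le c_3^k\|\bar{e}(0)\|+c_4/\sqrt{1-c_3^2}$, and substituting the choice of $\delta$ into $c_4^2/(1-c_3^2)$ — where $\delta^{-1}=2(1-\alpha c_2)/c_2$ — shows that the residual term simplifies to $O(\alpha/(1-\beta))$.

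The main obstacle is calibrating the Young split so that the theorem's exact constants emerge. A naive Nesterov-style contraction would give $(1-2\alpha c_2)\|\bar{e}(k)\|^2$, producing a spurious $-2\alpha^2\delta c_2$ after multiplication by $(1+\alpha\delta)$; the cleaner $(1-\alpha c_2)(1+\alpha\delta)=c_3^2$ form instead requires using Lemma \ref{sc_bnd} only ``halfway'' and absorbing the leftover $\|\bar{g}(k)\|^2$ via the stepsize condition $\alpha\le c_1$. Once the one-step recursion is in hand, the geometric convergence and the $O(\alpha/(1-\beta))$ neighborhood size follow from a routine telescoping argument.
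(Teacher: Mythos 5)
Your proposal is correct and follows essentially the same route as the paper: the same Young's-inequality split with parameter $\alpha\delta$, the same ``halfway'' use of Lemma \ref{sc_bnd} to get $(1-\alpha c_2)\|\bar{e}(k)\|^2+\alpha(\alpha-c_1)\|\bar{g}(k)\|^2$ with the $\|\bar{g}(k)\|^2$ term killed by $\alpha\le c_1$, and the same telescoping to the $O(\frac{\alpha}{1-\beta})$ neighborhood. The only imprecision is the opening identity $\bar{e}(k+1)=(\bar{e}(k)-\alpha\bar{g}(k))-\alpha(g(k)-\bar{g}(k))$: in the restricted strongly convex case $\cX^*$ need not be a singleton, so $x^*(k+1)\neq x^*(k)$ in general and this is not an equality; the paper instead starts from $\|\bar{e}(k+1)\|^2\le\|\bar{x}(k+1)-x^*(k)\|^2$, which holds because $x^*(k+1)$ is the projection of $\bar{x}(k+1)$ onto $\cX^*$, and your argument goes through verbatim after this one-line repair.
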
}}
\begin{proof}
Recalling that $x^*(k+1)=\Proj_{\cX^*}(\bar{x}(k+1))$ and
$\bar{e}(k+1)=\bar{x}(k+1)-x^*(k+1)$, we have
\begin{align*}
\|\bar{e}(k+1)\|^2  & \leq \|\bar{x}(k+1) - x^*(k)\|^2 \nonumber\\
& = \|\bar{x}(k) - x^*(k) - \alpha g(k)\|^2 \nonumber \\
& = \|\bar{e}(k)-\alpha \bar{g}(k) + \alpha(\bar{g}(k)-g(k))\|^2 \quad  \nonumber \\
& = \|\bar{e}(k)-\alpha \bar{g}(k)\|^2 +\alpha^2 \|\bar{g}(k)-g(k)\|^2 + 2\alpha (\bar{g}(k)-g(k))^T(\bar{e}(k)-\alpha \bar{g}(k)) \nonumber \\
& \leq (1 + \alpha\delta) \|\bar{e}(k)-\alpha \bar{g}(k)\|^2 + \alpha(\alpha+\delta^{-1}) \|\bar{g}(k)-g(k)\|^2,
\end{align*}
where the last inequality follows again from $\pm2a^Tb\le
\delta^{-1}\|a\|^2+ \delta\|b\|^2$ for any $\delta >0$. The bound
of $\|\bar{g}(k)-g(k)\|^2$ follows from Lemma \ref{bnd_g} and
Theorem \ref{h_bnd}, and we shall bound $ \|\bar{e}(k)-\alpha
\bar{g}(k)\|^2$, which is a standard exercise; we repeat below for
completeness. Applying Lemma \ref{sc_bnd} and noticing
$\bar{g}(x)=\nabla{\bar{f}}(x)$ by definition, we have
\begin{align*}
      \|\bar{e}(k)-\alpha \bar{g}(k)\|^2 &
=     \|\bar{e}(k)\|^2 + \alpha^2 \|\bar{g}(k)\|^2 - 2\alpha \bar{e}(k)^T \bar{g}(k) \nonumber \\
&\le    \|\bar{e}(k)\|^2 + \alpha^2 \|\bar{g}(k)\|^2 - \alpha c_1 \|\bar{g}(k)\|^2-\alpha c_2\|\bar{e}(k)\|^2 \nonumber \\
& = (1-\alpha c_2) \|\bar{e}(k)\|^2 + \alpha(\alpha -c_1) \|\bar{g}(k)\|^2. \end{align*}
We shall pick $\alpha\le c_1$ so that $\alpha(\alpha -c_1) \|\bar{g}(k)\|^2\le 0$. Then from the last two inequality arrays, we have
\begin{align*}
\|\bar{e}(k+1)\|^2  & \leq (1 + \alpha\delta)(1-\alpha c_2) \|\bar{e}(k)\|^2 +\alpha(\alpha+\delta^{-1}) \|\bar{g}(k)-g(k)\|^2 \\
& \leq (1 - \alpha c_2+ \alpha\delta - \alpha^2 \delta c_2)
\|\bar{e}(k)\|^2 +\alpha^3(\alpha+\delta^{-1}) \frac{L_h^2
D^2}{(1-\beta)^2}.
\end{align*}

Note that if $f$ is strongly convex, then $c_1c_2=\frac{\mu_{\bar{f}} L_{\bar{f}}}{(\mu_{\bar{f}} + L_{\bar{f}})^2} < 1$; if $f$
is restricted strongly convex, then $c_1c_2=\frac{\theta(1-\theta)\nu_{\bar{f}}}{L_{\bar{f}}} < 1$ because $\theta \in [0,1]$ and
$\nu_{\bar{f}}<L_{\bar{f}}$. Therefore we have $c_1 < 1/c_2$. When $\alpha < c_1$, $(1 + \alpha\delta)(1-\alpha c_2) > 0$.

Next, since
$$\|\bar{e}(k)\|^2\le c_3^{2k} \|\bar{e}(0)\|^2+\frac{1-c_3^{2k}}{1-c_3^2}c_4^2\le c_3^{2k} \|\bar{e}(0)\|^2+\frac{c_4^2}{1-c_3^2},$$
we get
$$\|\bar{e}(k)\| \leq c_3^{k} \|\bar{e}(0)\| + \frac{c_4}{\sqrt{1-c_3^2}}.$$
If we set $$\delta=\frac{c_2}{2(1-\alpha c_2)},$$ then we obtain
$$c_3^2=1-\frac{\alpha c_2}{2}<1,$$
$$\frac{c_4}{\sqrt{1-c_3^2}}=\frac{\alpha L_h D}{1-\beta}\sqrt{\frac{\alpha(\alpha+\frac{2(1-\alpha c_2)}{c_2})}{\frac{\alpha c_2}{2}}}=\frac{\alpha L_h D}{1-\beta} \sqrt{\frac{4}{c_2^2}-\frac{2}{c_2}\alpha}=O(\frac{\alpha}{1-\beta}),$$
which completes the proof.
\end{proof}

\begin{remark}
As a result, if $f$ is strongly convex, then $\bar{x}(k)$
geometrically converges until reaching an $O(\frac{\alpha}{1-\beta})$-neighborhood
of the unique solution $x^*$; on the other hand, if $f$ is
restricted strongly convex, then $\bar{x}(k)$ geometrically
converges until reaching an $O(\frac{\alpha}{1-\beta})$-neighborhood of the
 solution set $\mathcal{X}^*$.
\end{remark}

\subsection{Local agent convergence}
\begin{corollary} \label{coro2}
Under Assumption \ref{assmp1}, if $f$ is either strongly convex or
restricted strongly convex, $\alpha < \min\{(1 +
\lambda_n(W))/L_h, c_1\}$ and $\beta<1$,
then we have
$$\|x_{(i)}(k)-{x}^*(k)\| \le c_3^k  \|{x}^*(0)\| + \frac{c_4}{\sqrt{1-c_3^2}} + \frac{\alpha D}{1-\beta},$$
where $x^*(0),{x}^*(k)\in\cX^*$ are solutions defined at the beginning of subsection \ref{sc:bdm} and {the constants $c_3$, $c_4$, $D$ are the same as given in Theorem \ref{mean convg}.}
\end{corollary}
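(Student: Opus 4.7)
The plan is to use the triangle inequality to split the local agent error into a deviation-from-mean piece and a mean-error piece, and then invoke the two results already established for each piece. Specifically, I would write
$$\|x_{(i)}(k) - x^*(k)\| \le \|x_{(i)}(k) - \bar{x}(k)\| + \|\bar{x}(k) - x^*(k)\| = \|x_{(i)}(k) - \bar{x}(k)\| + \|\bar{e}(k)\|.$$

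For the first term, since the stepsize bound $\alpha \le (1+\lambda_n(W))/L_h$ from Theorem \ref{h_bnd} is in force, the gradient bound \eqref{hk_bnd} holds, and together with $\beta < 1$ this triggers Lemma \ref{lem:bnd_dev}, giving
$$\|x_{(i)}(k) - \bar{x}(k)\| \le \frac{\alpha D}{1-\beta}.$$

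For the second term, the strong or restricted-strong convexity of $f$ together with $\alpha \le c_1$ puts us in the setting of Theorem \ref{mean convg}, so
$$\|\bar{e}(k)\| \le c_3^k \|\bar{e}(0)\| + \frac{c_4}{\sqrt{1-c_3^2}}.$$
Recalling that we initialize with $x_{(i)}(0)=0$, we have $\bar{x}(0)=0$, hence $\bar{e}(0)=-x^*(0)$ and $\|\bar{e}(0)\|=\|x^*(0)\|$. Adding the two bounds yields the claimed inequality.

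There is no real obstacle here: the whole corollary is a direct triangle-inequality assembly of Lemma \ref{lem:bnd_dev} and Theorem \ref{mean convg}. The only minor thing to verify is that the hypotheses of both results are subsumed by the hypotheses of the corollary, which is immediate since $\alpha < \min\{(1+\lambda_n(W))/L_h,\, c_1\}$ implies both $\alpha \le (1+\lambda_n(W))/L_h$ (needed for Theorem \ref{h_bnd} and hence for Lemma \ref{lem:bnd_dev}) and $\alpha \le c_1$ (needed for Theorem \ref{mean convg}), while $\beta < 1$ is assumed throughout. The constants $c_3$, $c_4$, $D$ are inherited unchanged from Theorem \ref{mean convg}, so no new calculation is necessary.
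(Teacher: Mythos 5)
Your proposal is correct and follows exactly the paper's own argument: a triangle-inequality split into $\|x_{(i)}(k)-\bar{x}(k)\|$ and $\|\bar{e}(k)\|$, bounded respectively by Lemma \ref{lem:bnd_dev} and Theorem \ref{mean convg}. Your added observation that $x_{(i)}(0)=0$ gives $\|\bar{e}(0)\|=\|x^*(0)\|$ is a detail the paper leaves implicit, and it correctly explains the appearance of $\|x^*(0)\|$ in the stated bound.
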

\begin{proof}
From Lemma \ref{lem:bnd_dev} and Theorem \ref{mean convg} we have
\begin{align}
&    \|x_{(i)}(k)-{x}^*(k)\| \nonumber \\
\leq&\|\bar{x}(k)-{x}^*(k)\|+\|x_{(i)}(k)-\bar{x}(k)\| \nonumber \\
\leq& c_3^k\|{x}^*(0)\|+\frac{c_4}{\sqrt{1-c_3^2}}+\frac{\alpha
D}{1-\beta}, \nonumber
\end{align}
which completes the proof.
\end{proof}

\begin{remark}
Similar to Theorem \ref{mean convg} and Remark 1, if we set
$\delta=\frac{c_2}{2(1-\alpha c_2)}$, and if $f$ is strongly
convex, then {$x_{(i)}(k)$} geometrically converges to an
$O(\frac{\alpha}{1-\beta})$-neighborhood of the unique solution
$x^*$; if $f$ is restricted strongly convex, then
{$x_{(i)}(k)$} geometrically converges to an
$O(\frac{\alpha}{1-\beta})$-neighborhood of the solution set
$\mathcal{X}^*$.
\end{remark}

\section{Decentralized basis pursuit}
\label{sec:3}

\subsection{Problem statement}

We derive an algorithm for solving a decentralized basis pursuit
problem to illustrate the application of iteration
\eqref{dec_grad}.

Consider a multi-agent network of $n$  agents who collaboratively
find a sparse representation $y$ of a given signal $b \in \RR^p$
that is known to all the agents. Each agent $i$ holds a part $A_i
\in \RR^{p \times q_i}$ of the entire dictionary $A \in \RR^{p
\times q}$, where $q = \sum_{i=1}^n q_i$, and shall recover the
corresponding  $y_i \in \RR^{q_i}$. Let
\begin{equation}
y := \left[
             \begin{array}{c}
               y_1 \\
               \vdots \\
               y_n
             \end{array}
           \right] \in \RR^{q},\quad
A := \left[
             \begin{array}{ccc}
              |   &        & | \\
              A_1 & \hdots & A_n \\
              |   &        & |
             \end{array}
           \right] \in \RR^{p \times q}. \nonumber
\end{equation}
 The problem is
\begin{align} \label{eq:cp-bp}
\Min \limits_y &  \quad \|y\|_1, \\
\st & \quad \sum_{i=1}^n A_i y_i = b, \nonumber
\end{align}
where $\sum_{i=1}^n A_i y_i=Ay$. {This formulation is a column-partitioned version of decentralized basis pursuit, as opposed to the row-partitioned version in \cite{Mota2012} and \cite{yuan2013}. Both versions} find applications
in, for example, collaborative spectrum sensing
\cite{Bazerque2010}, sparse event detection \cite{meng2009sparse},
and seismic modeling \cite{Mota2012}.


Developing efficient decentralized algorithms to solve
\eqref{eq:cp-bp} is nontrivial since the objective function is
neither differentiable nor strongly convex, and the constraint
couples all the agents. In this paper, we turn to an equivalent
and tractable reformulation by appending a strongly convex term
and solving its Lagrange dual problem by decentralized gradient
descent. Consider the augmented form of \eqref{eq:cp-bp} motivated
by \cite{Lai2013}:
\begin{align} \label{eq:cp-bp-lb}
\Min \limits_y &  \quad \|y\|_1 + \frac{1}{2\gamma} \|y\|^2, \\
\st & \quad A y = b, \nonumber
\end{align}
where the regularization parameter $\gamma > 0$ is chosen so that
\eqref{eq:cp-bp-lb} returns a solution to \eqref{eq:cp-bp}.
Indeed, provided that $Ay=b$ is consistent, there always exists $\gamma_{\min}
> 0$ such that the solution to \eqref{eq:cp-bp-lb} is also a
solution to \eqref{eq:cp-bp} for any $\gamma \geq \gamma_{\min}$
\cite{Friedlander2007,Yin2010}. Linearized Bregman iteration
proposed in \cite{yin2008bregman} is proven to converge to the
unique solution of \eqref{eq:cp-bp-lb} efficiently. See
\cite{Yin2010} for its analysis and \cite{osher2011fast} for
important improvements. Since the problem \eqref{eq:cp-bp-lb} is
now solved over a network of agents, we need to devise a
decentralized version of linearized Bregman iteration.

The Lagrange dual of \eqref{eq:cp-bp-lb}, casted as a minimization
(instead of maximization) problem, is
\begin{align}
   \Min\limits_x~f(x) := \frac{\gamma}{2} \|A^T x - \text{Proj}_{[-1,1]}(A^T x)\|^2 - b^T
   x, \label{eq:dual}
\end{align}
where $x \in \RR^p$ is the dual variable and
$\text{Proj}_{[-1,1]}$ denotes the element-wise projection onto $[-1,1]$.

We turn \eqref{eq:dual} into the form of \eqref{eq:original
problem}: \beq\label{dec_bp} \Min_x~f(x)=\sum_{i=1}^n
f_i(x),~\text{where}~f_i(x):= \frac{\gamma}{2} \|A_i^T x -
\text{Proj}_{[-1,1]}(A_i^T x)\|^2 - \frac{1}{n} b^T x. \eeq
The function $f_i$ is defined with $A_i$ and $b$, where matrix $A_i$ is the
private information of agent $i$. The local objective functions
$f_i$ are differentiable with the gradients given as
\begin{align} \label{eq:gradient}
\nabla f_i(x) = \gamma A_i \text{Shrink}(A_i^T x) - \frac{b}{n},
\end{align}
where $\text{Shrink}(z)$ is the shrinkage operator defined as
$\max(|z|-1,0)\sign(z)$ component-wise.

Applying the iteration \eqref{dec_grad} to the problem \eqref{dec_bp}
starting with $x_{(i)}(0) = 0$, we obtain the iteration
\begin{align} \label{eq:dlb}
\boxed{x_{(i)}(k+1) = \sum_{j=1}^n w_{ij} x_{(j)}(k) - \alpha \left(A_i y_i(k) - \frac{b}{n}\right), \quad \text{where}~ y_i(k) = \gamma
\text{Shrink}(A_i^T x_{(i)}(k)).}
\end{align}
{Note that the primal solution $y_i(k)$ is
iteratively updated, as a middle step for the update of
$x_{(i)}(k+1)$.}

{ It is easy to verify that the local objective functions
$f_i$ are Lipschitz differentiable with the constants $L_{f_i} =
\gamma \| A_i\|^2$. Besides, given that $Ay=b$ is consistent,
\cite{Lai2013} proves that $f(x)$ is restricted strongly convex
with a computable constant $\nu_f >0$.
Therefore,  the objective function $f(x)$ in \eqref{eq:dual} has
$L_h=\max\{\gamma \|A_i\|^2:i=1,2,\cdots,n\}$,
$L_{\bar{f}}=\frac{\gamma}{n} \sum_{i=1}^n \|A_i\|^2$ and
$\nu_{\bar{f}}=\nu_{f}/n $. By Theorem \ref{mean convg}, any
local dual solution $x_{(i)}(k)$ generated by iteration
\eqref{eq:dlb} linearly converges to a neighborhood of the
solution set of \eqref{eq:dual}, and the primal solution $y(k) =
[y_1(k); \cdots; y_n(k)]$  linearly converges to a neighborhood of
the unique solution of \eqref{eq:cp-bp-lb}.}

\begin{theorem}\label{BP-convg}
Consider $x_{(i)}(k)$ generated by iteration \eqref{eq:dlb} and
$\bar{x}(k) := \frac{1}{n}\sum_{i=1}^n x_{(i)}(k)$. The unique
solution of \eqref{eq:cp-bp-lb} is $y^*$ and the projection of
$\bar{x}(k)$ onto the optimal solution set of \eqref{eq:dual} is
$\bar{x}^*(k)=\text{Proj}_{\mathcal{X}^*}(\bar{x}(k))$. If the
stepsize $\alpha < \min\{{(1 + \lambda_n(W))}/{L_h}, c_1\}$, we
have
\begin{align}
\label{thm4}
\|x_{(i)}(k)-\bar{x}^*(k)\| \le c_3^k  \|\bar{x}^*(0)\| + \left(\frac{c_4}{\sqrt{1-c_3^2}} + \frac{\alpha D}{1-\beta}\right),
\end{align}
where the constants $c_3$ and $c_4$ are the same as given in
Theorem \ref{mean convg}. In particular, if we set
$\delta=\frac{c_2}{2(1-\alpha c_2)}$ such that
$c_3=\sqrt{1-\frac{\alpha c_2}{2}} \in (0,1)$, then  $\frac{c_4}{\sqrt{1-c_3^2}} + \frac{\alpha D}{1-\beta}=O(\frac{\alpha}{1-\beta})$.
On the other hand, the primal solution satisfies
\begin{align}
\label{ybound} \| y(k) - y^* \| \leq  n \gamma \max_{i} \left(
\|A_i\| \|x_{(i)}(k) - \bar{x}^*(k)\| \right).
\end{align}
\end{theorem}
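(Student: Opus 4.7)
\textbf{Proof proposal for Theorem \ref{BP-convg}.} The plan is to obtain \eqref{thm4} as a direct specialization of Corollary \ref{coro2}, and to obtain \eqref{ybound} from the primal-dual relationship induced by the Lagrangian of \eqref{eq:cp-bp-lb}, exploiting the nonexpansiveness of the shrinkage operator.

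First I would verify that problem \eqref{dec_bp} satisfies all hypotheses used in Corollary \ref{coro2}. Each local function $f_i(x) = \frac{\gamma}{2}\|A_i^T x - \Proj_{[-1,1]}(A_i^T x)\|^2 - \frac{1}{n}b^T x$ is the composition of a convex Moreau envelope-type term with an affine map, so it is proper closed convex, lower bounded (by shifting and using Cauchy--Schwarz on the linear term over a bounded sublevel analysis), and Lipschitz differentiable with constant $L_{f_i} = \gamma\|A_i\|^2$ (the gradient \eqref{eq:gradient} is $\gamma A_i\,\text{Shrink}(A_i^T x) - b/n$, and $\text{Shrink}$ is $1$-Lipschitz). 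Thus Assumption \ref{assmp1}(a) holds with $L_h = \max_i \gamma\|A_i\|^2$. By the result of \cite{Lai2013} cited before the theorem, the aggregate $f(x)$ is restricted strongly convex with modulus $\nu_f > 0$ whenever $Ay=b$ is consistent, so the hypotheses of Corollary \ref{coro2} are met. Invoking the corollary with the stated stepsize $\alpha < \min\{(1+\lambda_n(W))/L_h, c_1\}$ yields \eqref{thm4} verbatim, and choosing $\delta = c_2/(2(1-\alpha c_2))$ reduces the additive term to $O(\alpha/(1-\beta))$ as in Theorem \ref{mean convg}.

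For \eqref{ybound}, I would use the Lagrangian $L(y,x) = \|y\|_1 + \frac{1}{2\gamma}\|y\|^2 + x^T(b - Ay)$ of \eqref{eq:cp-bp-lb}. Since $\|\cdot\|_1 + \frac{1}{2\gamma}\|\cdot\|^2$ is strongly convex, the minimizer of $L(\cdot,x)$ over $y$ is unique for every $x$; block separability together with the optimality condition $A_i^T x \in \partial\|y_i\|_1 + y_i/\gamma$ identifies it as $y_i(x) := \gamma\,\text{Shrink}(A_i^T x)$. Strong duality (primal problem is convex, constraint is linear and consistent) then guarantees $y_i^* = \gamma\,\text{Shrink}(A_i^T x^*)$ for any dual optimizer $x^* \in \mathcal{X}^*$; in particular, applied to $\bar{x}^*(k) = \Proj_{\mathcal{X}^*}(\bar{x}(k))$, we obtain $y_i^* = \gamma\,\text{Shrink}(A_i^T \bar{x}^*(k))$. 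Combining this with the update rule $y_i(k) = \gamma\,\text{Shrink}(A_i^T x_{(i)}(k))$ and invoking the $1$-Lipschitz continuity of $\text{Shrink}$ gives
\begin{align*}
\|y_i(k) - y_i^*\| \le \gamma \|A_i^T x_{(i)}(k) - A_i^T \bar{x}^*(k)\| \le \gamma \|A_i\|\,\|x_{(i)}(k) - \bar{x}^*(k)\|.
\end{align*}
Summing the blockwise bounds (using $\|y(k)-y^*\| \le \sum_{i=1}^n \|y_i(k)-y_i^*\| \le n\cdot\max_i\|y_i(k)-y_i^*\|$) delivers \eqref{ybound}.

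The main obstacle is really conceptual rather than technical: one must recognize that although $\mathcal{X}^*$ may not be a singleton, the primal solution $y^*$ is uniquely recoverable from every element of $\mathcal{X}^*$ via the shrinkage map, so it is legitimate to pair $y^*$ with the running projection $\bar{x}^*(k)$. Once this primal-dual recovery is in place, both \eqref{thm4} and \eqref{ybound} follow without additional machinery beyond Corollary \ref{coro2} and the nonexpansiveness of $\text{Shrink}$.
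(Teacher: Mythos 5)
Your proposal is correct and follows essentially the same route as the paper: \eqref{thm4} is invoked directly from Corollary \ref{coro2} (after checking that the $f_i$ in \eqref{dec_bp} satisfy Assumption \ref{assmp1} and that $f$ is restricted strongly convex per \cite{Lai2013}), and \eqref{ybound} is obtained from the primal-dual recovery formula $y_i^* = \gamma\,\text{Shrink}(A_i^T \bar{x}^*(k))$ together with the nonexpansiveness of the shrinkage operator and the bound $\|y(k)-y^*\|\le\sum_i\|y_i(k)-y_i^*\|\le n\max_i\|y_i(k)-y_i^*\|$. Your added justification of why $y^*$ is recoverable from \emph{any} dual optimizer (via strong duality and the strong convexity of the augmented primal objective) is a detail the paper leaves implicit, but the argument is the same.
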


\begin{proof}
The result \eqref{thm4} is a corollary of Corollary
\ref{coro2}. We focus on showing \eqref{ybound}.

Given any dual solution $\bar{x}(k)$, the primal solution of
\eqref{eq:cp-bp-lb} is $y^* = \gamma \text{Shrink}(A^T
\bar{x}^*(k))$. Recall that $y(k) = [y_1(k); \cdots;
y_n(k)]$ and $y_i(k) = \gamma \text{Shrink}(A_i^T x_{(i)}(k))$. We have
\begin{align} \label{eq:ybound-1}
\| y(k) - y^* \| = & \| [\gamma \text{Shrink}(A_1^T x_{(1)}(k));
\cdots; \gamma \text{Shrink}(A_n^T x_{(n)}(k))] - \gamma
\text{Shrink}(A^T
\bar{x}^*(k)) \| \\
                 \leq & \gamma \sum_{i=1}^n \| \text{Shrink}(A_i^T x_{(i)}(k)) - \text{Shrink}(A_i^T \bar{x}^*(k)) \|. \nonumber
\end{align}
Due to the contraction of the shrinkage operator, we have the bound $\|
\text{Shrink}(A_i^T x_{(i)}(k)) - \text{Shrink}(A_i^T \bar{x}^*(k)) \|
\leq \|A_i\| \|x_{(i)}(k) - \bar{x}^*(k)\| \leq \max_{i} \left( \|A_i\|
\|x_{(i)}(k) - \bar{x}^*(k)\| \right)$. Combining this inequality with
\eqref{eq:ybound-1}, we get \eqref{ybound}.
\end{proof}

\section{Numerical experiments}
\label{sec:4} In this section, we report our numerical results
applying the iteration \eqref{dec_grad} to a decentralized least
squares problem and the iteration \eqref{eq:dlb} to a
decentralized basis pursuit problem.

We generate a network consisting of $n$ agents with
$\frac{n(n-1)}{2}\eta$ edges that are uniformly randomly chosen, where  $n=100$ and $\eta=0.3$ are chosen for all the tests.
We ensure a connected network.

\subsection{Decentralized gradient descent for least squares}
\label{sec:4a} We apply the iteration \eqref{dec_grad} to the
least squares problem
\begin{align}
   \Min\limits_{x\in\RR^3} \quad \frac{1}{2}\|b-Ax\|^2=\sum\limits_{i=1}^n\frac{1}{2}\|b_i-A_ix\|^2. \label{LS}
\end{align}
The entries of the true signal $x^*\in\RR^3$ are i.i.d samples
from the Gaussian distribution $\mathcal{N}(0,1)$. $A_i \in
\RR^{3\times3}$ is the linear sampling matrix of agent $i$ whose
elements are i.i.d samples from  $\mathcal{N}(0,1)$, and
$b_i=A_ix^* \in \RR^3$ is the measurement vector of agent $i$.

For the problem \eqref{LS}, let $f_i(x)=\frac{1}{2}\|b_i-A_ix\|^2$. For any $x_a,\
x_b \in \RR^3$, $\|\nabla f_i(x_a)-\nabla
f_i(x_b)\|=\|A_i^TA_i(x_a-x_b)\|\leq \|A_i^TA_i\|\|x_a-x_b\|$, so
 $\nabla f_i(x)$ is Lipschitz continuous. In addition,
$\frac{1}{2}\|b-Ax\|_2^2$ is strongly convex since $A$ has full
column rank, with probability 1.

Fig. \ref{fig:1} depicts the convergence of the error $\bar{e}(k)$
corresponding to five different stepsizes. It shows that
$\bar{e}(k)$ reduces linearly until reaching an
$O(\alpha)$-neighborhood, which agrees with Theorem \ref{mean
convg}. Not surprisingly, a smaller $\alpha$  causes the algorithm
to converge more slowly.


\begin{figure}
\centering
\begin{center}
\includegraphics[height=7cm]{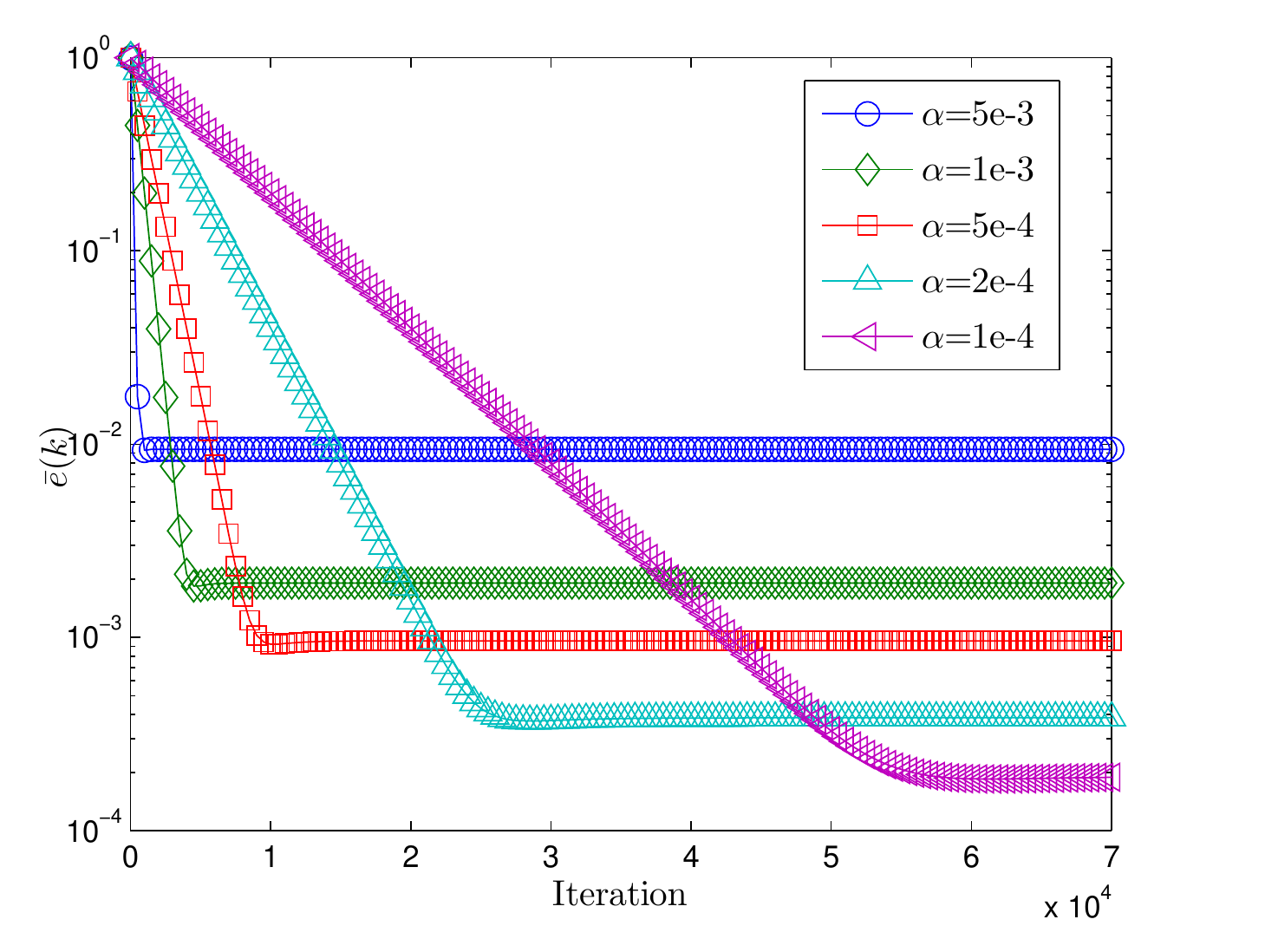}
\caption{Comparison of different fixed stepsizes for the decentralized gradient descent
algorithm.} \label{fig:1}
\end{center}
\end{figure}

%
%
%

Fig. \ref{fig:2} compares our theoretical stepsize bound in Theorem \ref{h_bnd} to the empirical bound of $\alpha$.
The theoretical bound for this
experimental network is
$\min\{\frac{1+\lambda_n(W)}{L_h},c_1\}=0.1038$. In Fig. 2, we
choose $\alpha=0.1038$ and then the slightly larger $\alpha=0.12$.
We observe  convergence
 with $\alpha=0.1038$ but clear divergence with $\alpha=0.12$. This shows that our  bound on $\alpha$ is quite close to the actual requirement.
\begin{figure}
\centering
\begin{center}
\includegraphics[height=7cm]{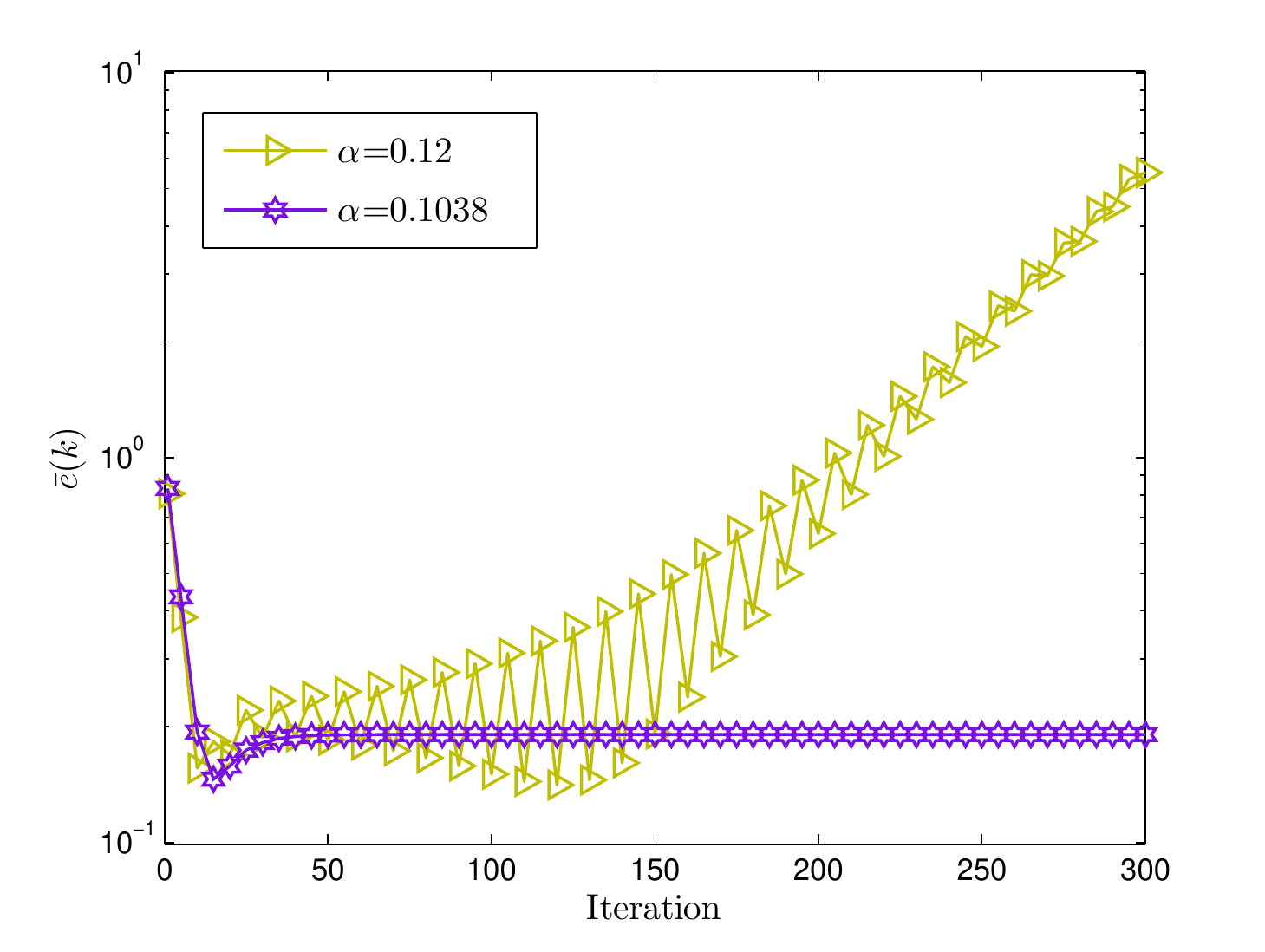}
\caption{Comparison of the decentralized gradient descent
algorithm with stepsizes $\alpha=0.1038$ and $\alpha=0.12$.
} \label{fig:2}
\end{center}
\end{figure}

\subsection{Decentralized gradient descent for basis pursuit}
\label{sec:4b} In this subsection we test the iteration \eqref{eq:dlb} for the decentralized basis pursuit problem
\eqref{eq:cp-bp}.

Let $y \in \RR^{100}$ be the unknown signal  whose entries are i.i.d. samples from
$\mathcal{N}(0,1)$. The entries of the measurement matrix $A \in
\RR^{50\times100}$ are also i.i.d. samples from $\mathcal{N}(0,1)$. Each
agent $i$ holds the $i$th column of $A$. $b=Ay\in \RR^{50}$ is the measurement
vector. We use the same network as in the last test.

\begin{figure}
\centering
\begin{center}
\includegraphics[height=7cm]{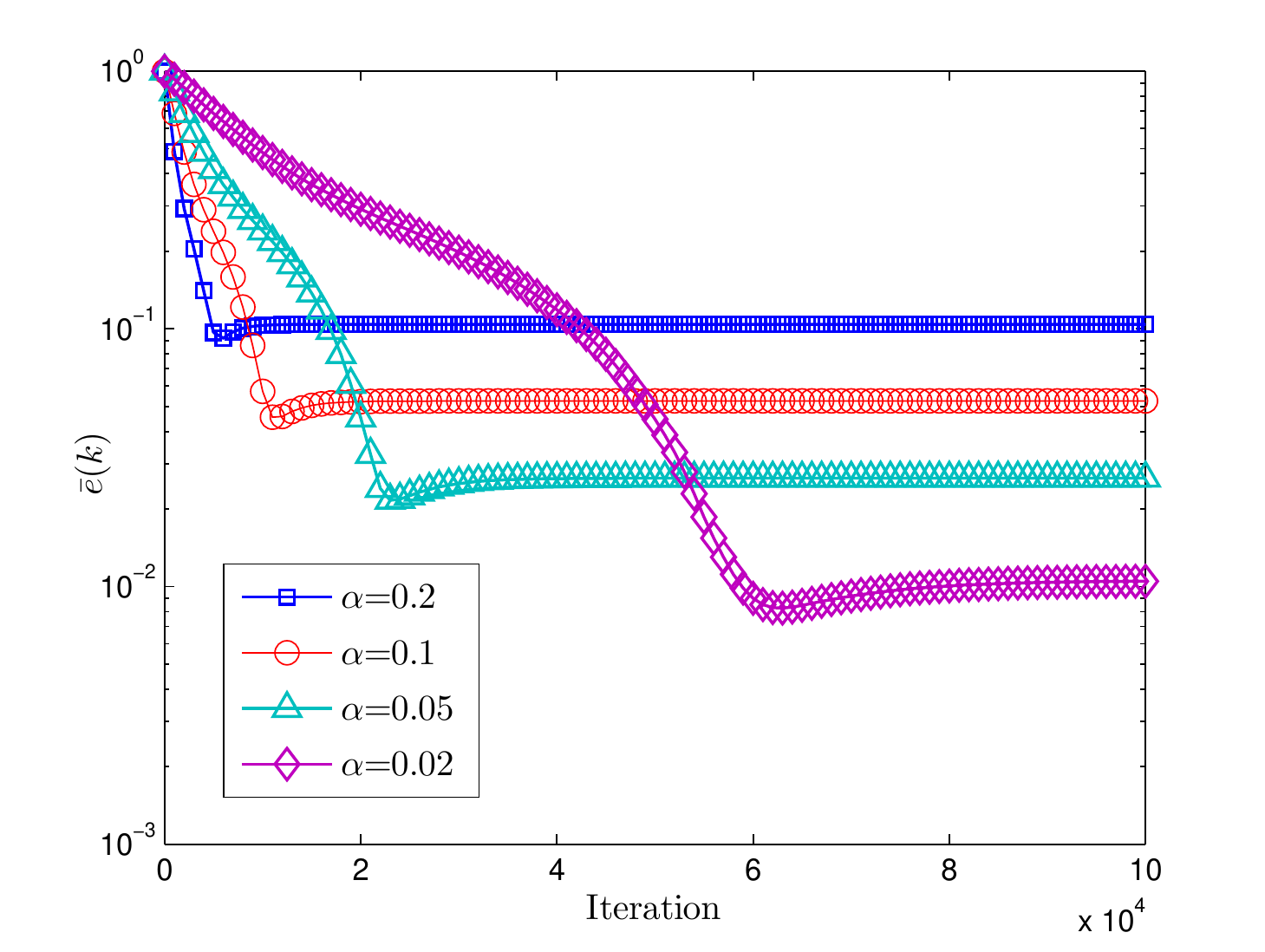}
\caption{Convergence of the mean value of the dual variable
$\bar{x}(k)$.}
\label{fig:3}
\end{center}
\end{figure}

\begin{figure}
\centering
\begin{center}
\includegraphics[height=7cm]{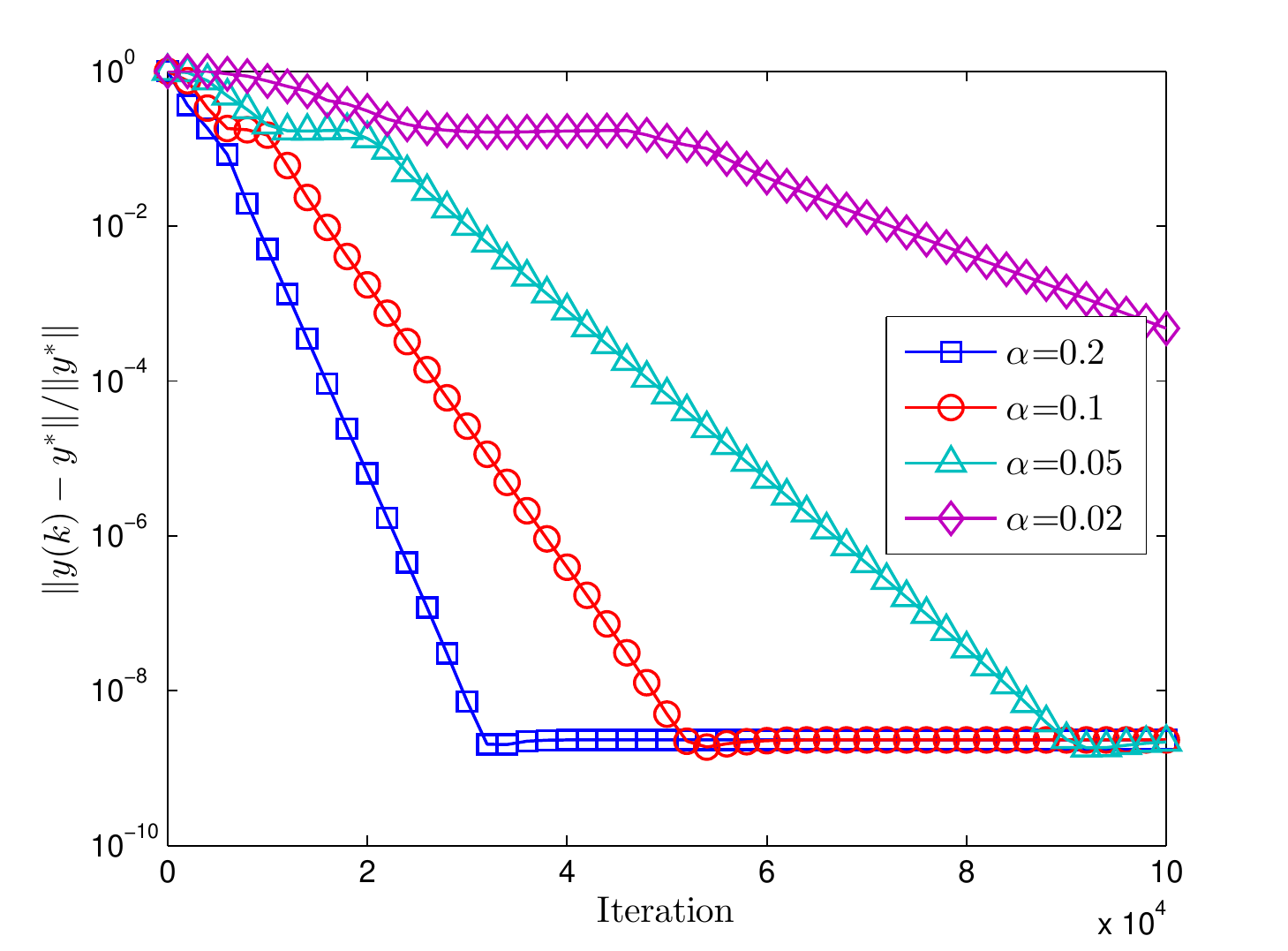}
\caption{Convergence of the primal variable $y(k)$. $y^*$ is the
 solution of the problem \eqref{eq:cp-bp-lb}.
} \label{fig:4}
\end{center}
\end{figure}

Fig. \ref{fig:3} depicts the convergence of  $\bar{x}(k)$, the
mean of the dual variables at iteration $k$. As stated in Theorem
\ref{BP-convg}, $\bar{x}(k)$ converges linearly to an
$O(\alpha)$-neighborhood of the  solution set $\mathcal{X}^*$. The
limiting errors $\bar{e}(k)$ corresponding to the four values of
$\alpha$ are proportional to $\alpha$. As the stepsize becomes
smaller, the algorithm converges more accurately to
$\mathcal{X}^*$. Fig. \ref{fig:4} shows the linear convergence of
the primal variable $y(k)$. It is interesting that the $y(k)$
corresponding to three different values of $\alpha$ appear to
reach the same level of accuracy, which might be related to the
error forgetting property of the first-order $\ell_1$ algorithm
\cite{YinOsher2013} and deserves further investigation.

\section{Conclusion}
Consensus optimization problems in multi-agent networks arise in
applications such as mobile computing, self-driving cars'
coordination, cognitive radios, as well as collaborative data
mining. Compared to the traditional centralized approach, a
decentralized  approach offers more balanced communication load
and better privacy protection. In this paper, our effort is to
provide a mathematical understanding to the decentralized gradient
descent method with a fixed stepsize. We give a tight
condition for guaranteed convergence, as well as an example to
illustrate the fail of convergence when the condition is violated.
We provide the analysis of convergence and the rates of convergence
for problems with different properties and establish the relations
between network topology, stepsize, and convergence speed, which
shed some light on network design. The numerical observations
reasonably matches the theoretical results.

\section*{Acknowledgements}
Q. Ling is supported by NSFC grant 61004137. W. Yin is supported
by ARL and ARO grant W911NF-09-1-0383 and NSF grants DMS-0748839
and DMS-1317602. The authors thank Yangyang Xu for helpful
comments.

\bibliographystyle{siam}
\bibliography{DGD}

\begin{thebibliography}{10}

\bibitem{bauschke2011convex}
{\sc H.~H. Bauschke and P.~L. Combettes}, {\em Convex Analysis and Monotone
  Operator Theory in Hilbert Spaces}, Springer, 2011.

\bibitem{Bazerque2010}
{\sc J.~A. Bazerque and G.~B. Giannakis}, {\em Distributed spectrum sensing for
  cognitive radio networks by exploiting sparsity}, IEEE Transactions on Signal
  Processing, 58 (2010), pp.~1847--1862.

\bibitem{Bazerque2011}
{\sc J.~A. Bazerque, G.~Mateos, and G.~B. Giannakis}, {\em Group-lasso on
  splines for spectrum cartography}, IEEE Transactions on Signal Processing, 59
  (2011), pp.~4648--4663.

\bibitem{Boyd2004}
{\sc S.~Boyd, P.~Diaconis, and L.~Xiao}, {\em Fastest mixing markov chain on a
  graph}, SIAM review, 46 (2004), pp.~667--689.

\bibitem{Cao2013}
{\sc Y.~Cao, W.~Yu, W.~Ren, and G.~Chen}, {\em An overview of recent progress
  in the study of distributed multi-agent coordination}, IEEE Transactions on
  Industrial Informatics, 9 (2013), pp.~427--438.

\bibitem{Cavalcante2013}
{\sc R.~L. Cavalcante and S.~Stanczak}, {\em A distributed subgradient method
  for dynamic convex optimization problems under noisy information exchange},
  IEEE Jounal of Selected Topics in Signal Processing, 7 (2013), pp.~243--256.

\bibitem{Sayed2013}
{\sc J.~Chen and A.~H. Sayed}, {\em Diffusion adaptation strategies for
  distributed optimization and learning over networks}, IEEE Transactions on
  Signal Processing, 60 (2012), pp.~4289--4305.

\bibitem{Duchi2012}
{\sc J.~C. Duchi, A.~Agarwal, and M.~J. Wainwright}, {\em Dual averaging for
  distributed optimization: convergence analysis and network scaling}, IEEE
  Transactions on Automatic Control, 57 (2012), pp.~592--606.

\bibitem{Friedlander2007}
{\sc M.~P. Friedlander and P.~Tseng}, {\em Exact regularization of convex
  programs}, SIAM Journal on Optimization, 18 (2007), pp.~1326--1350.

\bibitem{Giannakis2013}
{\sc G.~B. Giannakis, V.~Kekatos, N.~Gatsis, S.-J. Kim, H.~Zhu, and
  B.~Wollenberg}, {\em Monitoring and optimization for power grids: A signal
  processing perspective}, IEEE Signal Processing Magazine, 30 (2013),
  pp.~107--128.

\bibitem{Jakovetic2013}
{\sc D.~Jakovetic, J.~Xavier, and J.~M. Moura}, {\em Fast distributed gradient
  methods}, IEEE Transactions on Automatic Control, 59 (2014), pp.~1131--1146.

\bibitem{Jakubiec2013}
{\sc F.~Jakubiec and A.~Ribeiro}, {\em D-map: Distributed maximum a posteriori
  probability estimation of dynamic systems}, IEEE Transactions on Signal
  Processing, 61 (2013), pp.~450--466.

\bibitem{Kekatos2013}
{\sc V.~Kekatos and G.~B. Giannakis}, {\em Distributed robust power system
  state estimation}, IEEE Transactions on Power Systems, 28 (2013),
  pp.~1617--1626.

\bibitem{Lai2013}
{\sc M.~Lai and W.~Yin}, {\em Augmented $\ell_1$ and nuclear-norm models with a
  globally linearly convergent algorithm}, SIAM Journal on Imaging Sciences, 6
  (2013), pp.~1059--1091.

\bibitem{Ling2013_dynamic}
{\sc Q.~Ling and A.~Ribeiro}, {\em Decentralized dynamic optimization through
  the alternating direction method of multipliers}, IEEE Transactions on Signal
  Processing, 62 (2014), pp.~1185--1197.

\bibitem{Ling2010}
{\sc Q.~Ling and Z.~Tian}, {\em Decentralized sparse signal recovery for
  compressive sleeping wireless sensor networks}, IEEE Transactions on Signal
  Processing, 58 (2010), pp.~3816--3827.

\bibitem{Ling2013_reweighted}
{\sc Q.~Ling, Z.~Wen, and W.~Yin}, {\em Decentralized jointly sparse
  optimization by reweighted $\ell_q$ minimization}, IEEE Transactions on
  Signal Processing, 61 (2013), pp.~1165--1170.

\bibitem{meng2009sparse}
{\sc J.~Meng, H.~Li, and Z.~Han}, {\em Sparse event detection in wireless
  sensor networks using compressive sensing}, in IEEE Conference on Information
  Sciences and Systems, 2009, pp.~181--185.

\bibitem{Mota2012}
{\sc J.~F. Mota, J.~M. Xavier, P.~M. Aguiar, and M.~Puschel}, {\em Distributed
  basis pursuit}, IEEE Transactions on Signal Processing, 60 (2012),
  pp.~1942--1956.

\bibitem{Nedic2009}
{\sc A.~Nedic and A.~Ozdaglar}, {\em Distributed subgradient methods for
  multi-agent optimization}, IEEE Transactions on Automatic Control, 54 (2009),
  pp.~48--61.

\bibitem{Nesterov2007}
{\sc Y.~Nesterov}, {\em Gradient methods for minimizing composite objective
  function}, CORE report,  (2007).

\bibitem{Olfati-Saber2007}
{\sc R.~Olfati-Saber, J.~A. Fax, and R.~M. Murray}, {\em Consensus and
  cooperation in networked multi-agent systems}, Proceedings of the IEEE, 95
  (2007), pp.~215--233.

\bibitem{osher2011fast}
{\sc S.~Osher, Y.~Mao, B.~Dong, and W.~Yin}, {\em Fast linearized bregman
  iteration for compressive sensing and sparse denoising}, Communications in
  Mathematical Sciences, 8 (2011), pp.~93--111.

\bibitem{Predd2007}
{\sc J.~B. Predd, S.~Kulkarni, and H.~V. Poor}, {\em Distributed learning in
  wireless sensor networks}, IEEE Signal Processing Magazine, 23 (2006),
  pp.~56--69.

\bibitem{Ram2010}
{\sc S.~S. Ram, A.~Nedic, and V.~V. Veeravalli}, {\em Distributed stochastic
  subgradient projection algorithms for convex optimization}, Journal of
  optimization theory and applications, 147 (2010), pp.~516--545.

\bibitem{Ren2007}
{\sc W.~Ren, R.~W. Beard, and E.~M. Atkins}, {\em Information consensus in
  multivehicle cooperative control}, IEEE Control Systems Magazine, 27 (2007),
  pp.~71--82.

\bibitem{Schizas2008}
{\sc I.~D. Schizas, A.~Ribeiro, and G.~B. Giannakis}, {\em Consensus in ad hoc
  wsns with noisy links -- part i: Distributed estimation of deterministic
  signals}, IEEE Transactions on Signal Processing, 56 (2008), pp.~350--364.

\bibitem{Tsianos2012_application}
{\sc K.~I. Tsianos, S.~Lawlor, and M.~G. Rabbat}, {\em Consensus-based
  distributed optimization: Practical issues and applications in large-scale
  machine learning}, in IEEE Allerton Conference on Communication, Control, and
  Computing, 2012, pp.~1543--1550.

\bibitem{Tsianos2012_strong}
{\sc K.~I. Tsianos and M.~G. Rabbat}, {\em Distributed strongly convex
  optimization}, in IEEE Conference on Communication, Control, and Computing,
  2012, pp.~593--600.

\bibitem{Tsitsiklis1986}
{\sc J.~Tsitsiklis, D.~Bertsekas, and M.~Athans}, {\em Distributed asynchronous
  deterministic and stochastic gradient optimization algorithms}, IEEE
  Transactions on Automatic Control, 31 (1986), pp.~803--812.

\bibitem{Tsitsiklis1984}
{\sc J.~N. Tsitsiklis}, {\em Problems in decentralized decision making and
  computation}, MIT PhD Thesis,  (1984).

\bibitem{Yan2013}
{\sc F.~Yan, S.~Sundaram, S.~Vishwanathan, and Y.~Qi}, {\em Distributed
  autonomous online learning: regrets and intrinsic privacy-preserving
  properties}, IEEE Transactions on Knowledge and Data Engineering, 25 (2013),
  pp.~2483--2493.

\bibitem{Yin2010}
{\sc W.~Yin}, {\em Analysis and generalizations of the linearized bregman
  method}, SIAM Journal on Imaging Sciences, 3 (2010), pp.~856--877.

\bibitem{YinOsher2013}
{\sc W.~Yin and S.~Osher}, {\em Error forgetting of bregman iteration}, Journal
  of Scientific Computing, 54 (2013), pp.~684--695.

\bibitem{yin2008bregman}
{\sc W.~Yin, S.~Osher, D.~Goldfarb, and J.~Darbon}, {\em Bregman iterative
  algorithms for $l_1$-minimization with applications to compressed sensing},
  SIAM Journal on Imaging Sciences, 1 (2008), pp.~143--168.

\bibitem{yuan2013}
{\sc K.~Yuan, Q.~Ling, W.~Yin, and A.~Ribeiro}, {\em A linearized bregman
  algorithm for decentralized basis pursuit}, in European Signal Processing
  Conference, 2013.

\bibitem{ZhangYin2013}
{\sc H.~Zhang and W.~Yin}, {\em Gradient methods for convex minimization:
  better rates under weaker conditions}, UCLA CAM Report,  (2013).

\bibitem{Zhao2002}
{\sc F.~Zhao, J.~Shin, and J.~Reich}, {\em Information-driven dynamic sensor
  collaboration}, IEEE Signal Processing Magazine, 19 (2002), pp.~61--72.

\bibitem{Zhou2010}
{\sc K.~Zhou and S.~I. Roumeliotis}, {\em Multirobot active target tracking
  with combinations of relative observations}, IEEE Transactions on Robotics,
  27 (2011), pp.~678--695.

\end{thebibliography}

\end{document}